\newtheorem{prop}{Proposition}[section]
\newtheorem{theo}[prop]{Theorem}
\newtheorem{ex}[prop]{Example}
\newtheorem{defn}[prop]{Definition}
\newtheorem{rem}[prop]{Remark}
\newtheorem{lem}[prop]{Lemma}
\newenvironment{proof}
 {\begin{trivlist} \item[\hskip \labelsep {\bf Proof}\hspace*{3 mm}]}
 {\hfill$\Box$\end{trivlist}}
\begin{document}

\title{Singular 3-manifolds in $\mathbb{R}^5$}
\author{Pedro Benedini Riul\footnote{Supported by FAPESP Grant 2019/00194-6}, Maria Aparecida Soares Ruas\footnote{Supported by CNPq grant
 402181/2015-8 and FAPESP grant 2014/00304-2 }\\ and  Andrea de Jesus Sacramento\footnote{Supported by CNPq grant
 150469/2017-9}}

\date{}

\maketitle
\begin{abstract}
We study 3-manifolds in $\mathbb{R}^5$ with corank $1$ singularities. At the singular point we define the curvature
locus using the first and second fundamental forms, which contains all the local second order geometrical information about the manifold.
\end{abstract}

\section{Introduction}
The aim of this paper is to study the  curvature locus of a 3-manifold in $\mathbb{R}^5$ with corank 1 singularities. Our investigation is motivated by
\cite{luciana} and \cite{Pedro/Cidinha/Raul} in which the authors study, respectively, singular surfaces in $\mathbb{R}^3$ and $\mathbb{R}^4$ with corank 1
singularities. More specifically, they both define the curvature parabola, a plane curve in the normal space that plays a resembling role as the curvature
ellipse for regular surfaces in Euclidean spaces, using the first and second fundamental forms of the surface at the singular point.
The regular case has been known since a long time ago 
(see \cite{karl}), and the curvature ellipse has proven to be a useful tool in the study of geometrical properties from both, the local and global viewpoint
\cite{fuster}, \cite{little} and \cite{mochida}.

For smooth $3$-manifolds immersed in $\mathbb{R}^n$, $n\geq5$, the geometry of the second fundamental form at a point is determined by the geometry of a set called the curvature locus. In \cite{carmen,Carmen3var2} the authors study the behaviour of this curvature locus, describing their possible topological types.
As the previous cases, we define the curvature locus as the image of the unit tangent vectors via the second fundamental form.


Let $M\subset\mathbb{R}^5$ be a $3$-manifold with a singularity of corank 1. This means that $M$ is the image of a smooth map $g:\widetilde{M}\rightarrow \mathbb{R}^5$ from a smooth regular $3$-manifold $\widetilde{M}$ whose differential map has rank $\geq1$ at any point. Hence, the tangent space $T{_p}M$ at a singular point $p$ degenerates to a 2-space and $N_{p}M$ is the $3$-space of directions orthogonal to $T_pM$ in $\mathbb{R}^5$.

Our paper is organized as follows: In section \ref{sec:fundamental form} we define the first and second fundamental forms of $M$ at $p$ and study their properties.
In Section \ref{sec:locus} using these fundamental forms, we define the curvature locus, $\Delta_p$, as a subset of $N_pM$ (see Definition \ref{def:curvature locus}). The curvature locus can be a substantial surface (that is, a surface with Gaussian curvature non identically zero), a planar region, a half-line, a line, a point, etc.

Given a corank $1$ surface in $\mathbb{R}^n$, $n=3,4$, the topological type of its curvature parabola can distinguish the orbits of the $\mathcal{A}^2$-classification ($\mathcal{A}^2$ is the space of 2-jets of diffeomorphisms in the source and target) of corank $1$ map germs $f:(\mathbb{R}^2,0)\rightarrow(\mathbb{R}^n,0)$, as shown in \cite{Pedro/Cidinha/Raul,luciana}.
Here, we also give a partition in six orbits of the set of all corank 1 map germs $f:(\mathbb{R}^3,0)\rightarrow(\mathbb{R}^5,0)$ according to their 2-jets under the action of $\mathcal{A}^2$ (see Proposition \ref{prop:orbitas}). However, in this case each orbit has
more than one possibility for the curvature locus and it can not be used to distinguish between the singularities (see Remark \ref{rem:obs orbitas}).
Even though a general result is not true, we proved a partial result, for a special case of corank $1$ $3$-manifold (see Theorem \ref{teo-partial}).

Furthermore, we show that two corank $1$ $2$-jets $(\mathbb{R}^3,0)\rightarrow(\mathbb{R}^5,0)$ are equivalent under the action of the subgroup $\mathcal{R}^2\times\mathcal{O}(5)$ of 2-jets of diffeomorphisms in the source and linear isometries of $\mathbb{R}^5$ if and only if there exists an isometry between the normal spaces preserving the respective curvature locus (Theorem \ref{teo:isometria}, whose proof is presented
in the Appendix at the end of paper). In  Section \ref{sec:geometria locus} we review some definitions and results from \cite{carmen,Carmen3var2}, on the second order geometry of regular $3$-manifolds in $\mathbb{R}^6$.

In \cite{wall/duplesis} the authors present a $\mathcal{G}=GL(3)\times GL(3)$-classification of nets of quadrics. To each net we can naturally associate a regular $3$-manifold in $\mathbb{R}^6$ whose second fundamental form is the net. A question that may arise is whether given two $\mathcal{G}$-equivalent nets,
the loci of curvatures of the two associated manifolds are topologically equivalent. The answer to this question is no and we give an example.

Finally, when projecting a regular $3$-manifold immersed in $\mathbb{R}^6$ in a tangent direction, we obtain a corank $1$ $3$-manifold in $\mathbb{R}^5$.
The last section shows that the curvature locus at a singular point
of a 3-manifold is the blow up of the curvature locus of the regular $3$-manifold from which the singular was projected, similarly to what was done in \cite{Pedro/Raul} for regular surfaces in $\mathbb{R}^4$ and singular surfaces in $\mathbb{R}^3$. Some examples are also presented to illustrate.


Aknowledgements: the authors would like to thank R. Oset Sinha for useful comments and constant encouragement.

\section{The second fundamental form  at a corank 1 singularity}\label{sec:fundamental form}

Let $M$ be a 3-manifold with a singularity of corank 1 at $p\in M$. We assume that $M$ is the image of a $C^{\infty}$ map $g:\widetilde{M}\rightarrow\mathbb{R}^5$, where $\widetilde{M}$ is a smooth regular 3-manifold and $q\in \widetilde{M}$ is a
singular corank 1 point of $g$ such that $g(q)=p$.

Given a coordinate system $\phi:U\rightarrow \mathbb{R}^3$ defined on some open neighbourhood $U$ of $q$ in $\widetilde{M}$,
we say that $f=g\circ\phi^{-1}$ is a local parametrisation of $M$ at $p$. See the diagram below.

$$
\xymatrix{
\mathbb{R}^{3}\ar@/_0.7cm/[rr]^-{f} & U\subset\widetilde{M}\ar[r]^-{g}\ar[l]_-{\phi} & M\subset\mathbb{R}^{5}
}
$$

We define the \emph{tangent space} to $M$ at $p$ as $T{_p}M=Im(dg{_q})$, where $dg_{q}:T_{q}\widetilde{M}\rightarrow T_{q}\mathbb{R}^5$ is the differential map of $g$ at $q$. Once $q\in\widetilde{M}$ is a corank 1 point of the map $g$, $\dim(T_pM)=2$. The \emph{normal space} $N_{p}M$ at $p$, is the 3-dimensional vector space such that $T_{p}\mathbb{R}^5=T_{p}M\oplus N_{p}M$. We denote the corresponding orthogonal projections by:
$$\begin{array}{cccc}
  \top: & T_{p}\mathbb{R}^5 & \rightarrow & T_{p}M \\
   & w & \rightarrow& w^{\top}
\end{array}\hspace{1.0cm} \begin{array}{cccc}
  \bot : &T_{p}\mathbb{R}^5 & \rightarrow & N_{p}M \\
   & w & \rightarrow& w^{\bot}
\end{array} $$

The Euclidian metric of $\mathbb{R}^5$ induces the \emph{first fundamental form} $\mathcal{I}:T_{q}\widetilde{M}\times T_{q}\widetilde{M}\rightarrow \mathbb{R}$:
$$\mathcal{I}(\textbf{u},\textbf{v})=\langle dg_{q}(\textbf{u}),dg_q(\textbf{v})\rangle,\,\,\,\,\mbox{for\,\,all}\,\,\textbf{u},\textbf{v}\in T_{q}\widetilde{M} .$$

Observe that $\mathcal{I}$ is not a Riemanniam metric on $T_{q}\widetilde{M}$. However $I$ is a pseudometric. Taking a local parametrisation $f=g\circ\phi^{-1}$ of $M$ at $p$ and $\{\partial x,\partial y,\partial z\}$ a basis of $T_{q}\widetilde{M}$, the coefficients of the first fundamental form with respect to $\phi$ are:
$$\begin{array}{ccc}
  E(q)= & \mathcal{I}(\partial x,\partial x)= & \langle f_x,f_x\rangle(\phi(q)) \\
  F(q)= & \mathcal{I}(\partial x,\partial y)= & \langle f_x,f_y\rangle(\phi(q))\\
  G(q)= & \mathcal{I}(\partial y,\partial y)= & \langle f_y,f_y\rangle(\phi(q)) \\
  H(q)= & \mathcal{I}(\partial z,\partial z)= & \langle f_z,f_z\rangle(\phi(q)) \\
  I(q)= & \mathcal{I}(\partial x,\partial z)= & \langle f_x,f_z\rangle(\phi(q)) \\
  J(q)= & \mathcal{I}(\partial y,\partial z)= & \langle f_y,f_z\rangle(\phi(q))
\end{array}$$

Notice that if $\textbf{u}=a\partial x+b\partial y + c\partial z$ then $$\mathcal{I}(\textbf{u},\textbf{u})=a^2E(q)+2abF(q)+b^2G(q)+c^2H(q)+2acI(q)+2bcJ(q).$$

We can also define the \emph{second fundamental form} $\mathcal{II}:T_{q}\widetilde{M}\times T_{q}\widetilde{M}\rightarrow N_{p}M$ of $M$ at $p$. Given local coordinates of $M$ at $p$ as before, we define

$$\begin{array}{ccc}
    \mathcal{II}(\partial x,\partial x)= f_{xx}^{\bot}(\phi(q)),\,\,&\mathcal{II}(\partial x,\partial y)= f_{xy}^{\bot}(\phi(q)),\,\,&   \mathcal{II}(\partial y,\partial y)= f_{yy}^{\bot}(\phi(q)),  \\
    \mathcal{II}(\partial z,\partial z)= f_{zz}^{\bot}(\phi(q)),\,\,&\mathcal{II}(\partial x,\partial z)= f_{xz}^{\bot}(\phi(q)),\,\,&\mathcal{II}(\partial y,\partial z)= f_{yz}^{\bot}(\phi(q)),
  \end{array}
$$
and we extend $\mathcal{II}$ to $T_{q}\widetilde{M}\times T_{q}\widetilde{M}$ in a unique way as a symmetric bilinear map.
Let $\textbf{u}=a\partial x+b\partial y + c\partial z\in T_q\widetilde{M}$ be a tangent vector. Then,  $$\mathcal{II}(\textbf{u},\textbf{u})=a^2f_{xx}^{\bot}(\phi(q))+2abf_{xy}^{\bot}(\phi(q))+b^2f_{yy}^{\bot}(\phi(q))+c^2f_{zz}^{\bot}(\phi(q))+2acf_{xz}^{\bot}(\phi(q))+2bcf_{yz}^{\bot}(\phi(q)).$$

\begin{lem}\label{lem:coordinates}
The definition of the second fundamental form does not depend on the choice of local coordinates on $\widetilde{M}$.
\end{lem}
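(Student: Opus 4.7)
The plan is to compare the two expressions for $\mathcal{II}(\mathbf{u},\mathbf{u})$ coming from two arbitrary coordinate charts and use bilinearity to reduce to comparing how the basis vectors transform. The crucial input that makes the corank~1 case work identically to the regular case is that first-order partial derivatives of the parametrisation are still tangent, so their normal projections vanish when we apply the chain rule.

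First I would take two coordinate systems $\phi:U\to\mathbb{R}^3$ and $\tilde\phi:U\to\mathbb{R}^3$ around $q$, with associated parametrisations $f=g\circ\phi^{-1}$ and $\tilde f=g\circ\tilde\phi^{-1}$, and let $h=\phi\circ\tilde\phi^{-1}$ be the change of coordinates, so that $\tilde f=f\circ h$. Writing $\phi=(x_1,x_2,x_3)$ and $\tilde\phi=(\tilde x_1,\tilde x_2,\tilde x_3)$, the chain rule gives
\begin{equation*}
\tilde f_{\tilde x_i\tilde x_j}\;=\;\sum_{k,l}\frac{\partial h_k}{\partial \tilde x_i}\frac{\partial h_l}{\partial \tilde x_j}f_{x_kx_l}\;+\;\sum_{k}\frac{\partial^2 h_k}{\partial \tilde x_i\partial \tilde x_j}f_{x_k}.
\end{equation*}

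The key observation is that each $f_{x_k}$ lies in $T_pM=\mathrm{Im}(dg_q)$, because $df_{\phi(q)}=dg_q\circ d\phi^{-1}_{\phi(q)}$ and so $\mathrm{span}\{f_{x_1},f_{x_2},f_{x_3}\}\subseteq T_pM$. Consequently $f_{x_k}^{\bot}=0$, and projecting the above display to $N_pM$ kills the last sum, yielding
\begin{equation*}
\tilde f_{\tilde x_i\tilde x_j}^{\bot}\;=\;\sum_{k,l}\frac{\partial h_k}{\partial \tilde x_i}\frac{\partial h_l}{\partial \tilde x_j}\,f_{x_kx_l}^{\bot}.
\end{equation*}
This is precisely the statement that, modulo the tangent space, second derivatives transform tensorially.

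Finally, I would express a generic vector $\mathbf{u}\in T_q\widetilde{M}$ in both bases, noting that the change of basis between $\{\partial\tilde x_i\}$ and $\{\partial x_k\}$ is exactly given by the Jacobian of $h$ at $\tilde\phi(q)$, i.e.\ $\partial\tilde x_i=\sum_k \frac{\partial h_k}{\partial\tilde x_i}\partial x_k$. Substituting into the formula that defines $\mathcal{II}$ in the $\tilde\phi$ chart and using the displayed transformation rule together with the symmetric bilinearity of $\mathcal{II}$, a direct expansion shows that the value of $\mathcal{II}(\mathbf{u},\mathbf{u})$ agrees with the one obtained from the $\phi$ chart, which is what is required.

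The only real obstacle is conceptual rather than computational: one must notice that although $T_pM$ is only two-dimensional while $\{f_{x_1},f_{x_2},f_{x_3}\}$ consists of three vectors, the inclusion $\mathrm{span}\{f_{x_k}\}\subseteq T_pM$ is all that is needed for the normal projection to annihilate the unwanted Christoffel-type terms. Once this is observed, the rest is a routine application of the chain rule and bilinearity.
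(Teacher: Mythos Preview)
Your argument is correct and is exactly the approach the paper indicates: the paper's proof simply instructs one to take another coordinate system and apply the chain rule, referring to Lemma~2.1 of \cite{luciana} for the analogous computation. Your write-up spells this out in full, including the key observation that the first partials $f_{x_k}$ lie in $T_pM$ so their normal projections vanish, which is precisely what makes the chain-rule computation go through unchanged in the corank~1 setting.
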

\begin{proof}
To prove this lemma consider another coordinate system of $\widetilde{M}$ at $q$  and apply the chain rule (see, for example, Lemma 2.1 in \cite{luciana}).
\end{proof}

For each normal vector $v\in N_{p}M$, we can consider the form $\mathcal{II}_v:T_{q}\widetilde{M}\times T_{q}\widetilde{M}\rightarrow \mathbb{R}$ which we call the \emph{second fundamental form of $M$ at $p$ along $v$}, given by $$\mathcal{II}_v(\textbf{u},\textbf{v})=\langle \mathcal{II}(\textbf{u},\textbf{v}),v\rangle.$$

The coefficients of $\mathcal{II}_v$ in terms of local coordinates $(x,y,z)$ are:
$$\begin{array}{ccc}
    l_v(q)=\langle f_{xx}^{\bot},v\rangle(\phi(q)),\,\, & m_v(q)= \langle f_{xy}^{\bot},v\rangle(\phi(q)),\,\,  & n_v(q)= \langle f_{yy}^{\bot},v\rangle(\phi(q)),  \\
    p_v(q)= \langle f_{zz}^{\bot},v\rangle(\phi(q)),\,\, & q_v(q)= \langle f_{xz}^{\bot},v\rangle(\phi(q)),\,\,  & r_v(q)= \langle f_{yz}^{\bot},v\rangle(\phi(q)).
  \end{array}
$$

Given $\textbf{u}=a\partial x+b\partial y + c\partial z\in T_q\widetilde{M}$ then $$\mathcal{II}_v(\textbf{u},\textbf{u})=a^2 l_v(q)+2abm_v(q)+b^2n_v(q)+c^2p_v(q)+2acq_v(q)+2bcr_v(q).$$

For a fixed orthonormal frame $\{v_1,v_2,v_3\}$ of $N_{p}M$, the quadratic form associated to the second fundamental form can be written as
\begin{equation}\label{eq:secondform}
  \mathcal{II}(\textbf{u},\textbf{u})= \mathcal{II}_{v_1}(\textbf{u},\textbf{u})v_1+\mathcal{II}_{v_2}(\textbf{u},\textbf{u})v_2 +\mathcal{II}_{v_3}(\textbf{u},\textbf{u})v_3,
\end{equation}
where $\mathcal{II}_{v_i}(\textbf{u},\textbf{u})=a^2 l_{v_i}+2abm_{v_i}+b^2n_{v_i}+c^2p_{v_i}+2acq_{v_i}+2bcr_{v_i}$, for $i=1,2,3$
and the above coefficients calculated at $q$. Furthermore, in terms of the chosen frame, the second fundamental form can be represented by the following $3\times6$ matrix of coefficients: $$\left(
                                                         \begin{array}{cccccc}
                                                           l_{v_1} & m_{v_1}& n_{v_1} & p_{v_1} & q_{v_1} & r_{v_1} \\
                                                           l_{v_2} & m_{v_2} & n_{v_2} & p_{v_2} & q_{v_2} & r_{v_2}\\
                                                           l_{v_3} & m_{v_3} & n_{v_3} & p_{v_3} & q_{v_3} & r_{v_3}
                                                         \end{array}
                                                       \right).
$$

\section{The curvature locus}\label{sec:locus}

Inpired by the definition of the curvature parabola for corank 1 surfaces in $\mathbb{R}^n$, $n=3,4$ we shall give a natural definition for the curvature locus
of corank 1 3-manifolds in $\mathbb{R}^5$.

\begin{defn}\label{def:curvature locus}
Let $C_q$ be the subset of unit vectors of $T_q\widetilde{M}$ and let $\eta:C_q\rightarrow N_pM$ be the map given by $\eta(\textbf{u})=\mathcal{II}(\textbf{u},\textbf{u}).$ We define the \emph{curvature locus} of $M$ at $p$, which we shall denote by $\Delta_p$, as the image of this map, that is, $\Delta_p=\eta(C_q)$.
\end{defn}

From the previous definition, $\Delta_p=\{\mathcal{II}(\textbf{u},\textbf{u})\mid \mathcal{I}(\textbf{u},\textbf{u})^{1/2}=1\}$ and from Lemma \ref{lem:coordinates}, the definition of the curvature locus does not depend on the choice of the local coordinates of $\widetilde{M}$.

\begin{ex}\label{ex:paraboloide}
Let $M$ be the image of $\widetilde{M}=\mathbb{R}^3$ by the map $g(x,y,z)=(x,y,xz,yz,z^2)$. So $M$ is the $3$-dimensional crosscap.
Taking coordinates $(u,v,w,u',v')\in \mathbb{R}^5$, $p=(0,0,0,0,0)$ and $q=(0,0,0)$, the tangent space $T_pM$ to $M$ at $p$ is the $(u,v)$-space and so, the normal space $N_pM$ is the $(w,u',v')$-space. Then, for any $\textbf{u}=a\partial x+b\partial y + c\partial z\in T_q\widetilde{M}$, we have $E=G=1$, $F=H=I=J=0$. The fundamental forms are given by $\mathcal{I}(\textbf{u},\textbf{u})=a^2+b^2$ and $\mathcal{II}(\textbf{u},\textbf{u})=(0,0,2ac,2bc,2c^2)$. Hence, the subset of unit tangent vectors is the cylinder $C_p=\{(a,b,c)\in\mathbb{R}^3\mid a^2+b^2=1\}$ and the curvature locus $$\Delta_p=\{(0,0,2ac,2bc,2c^2)\mid a^2+b^2=1\}$$ is a paraboloid.
\end{ex}

\begin{ex} Other examples of curvature loci for corank 1 3-manifolds in $\mathbb{R}^5$ are the following:
\begin{itemize}
\item[(i)] Let $g(x,y,z)=(x,y,z^2,xz,0)$ then $$\Delta_p=\{(0,0,2c^2,2ac,0)\mid a^2+b^2=1,c\in\mathbb{R}\}$$ is a parabolic region for all $p=(0,y,0,0,0)$ with $y\in \mathbb{R}$.

\item[(ii)] Let $g(x,y,z)=(x,y,xz,yz,0)$ then for $p=(0,0,0,0,0)$ we have that $$\Delta_p=\{(0,0,2ac,2bc,0)\mid a^2+b^2=1,c\in\mathbb{R}\}$$ is a plane.

\item[(iii)] Let $g(x,y,z)=(x,y,z^2,0,0)$ then for $p=(0,0,0,0,0)$ we have $$\Delta_p=\{(0,0,2c^2,0,0)\mid a^2+b^2=1,c\in\mathbb{R}\}$$ is a half-line.

\item[(iv)] Let $g(x,y,z)=(x,y,xz,0,0)$ then for $p=(0,0,0,0,0)$ we have $$\Delta_p=\{(0,0,2ac,0,0)\mid a^2+b^2=1,c\in\mathbb{R}\}$$ is a line in $N_pM$ through the origin.

\item[(v)] Let $g(x,y,z)=(x,y,0,0,0)$ then for $p=(0,0,0,0,0)$ we have that $\Delta_p$ is just the point $(0,0,0,0,0)$.
\end{itemize}
\end{ex}

\begin{rem}\label{obs:cilindro}
Since the map $g$, used for the initial construction, has corank 1 at $q$, we can choose the coordinate system $\phi$ and make rotations in $\mathbb{R}^5$ such  that
$$f(x,y,z)=(x,y,f_1(x,y,z),f_2(x,y,z),f_3(x,y,z)),$$
with $(f_i)_x=(f_i)_y=(f_i)_z=0$ at $\phi(q)$, for $i=1,2,3$. In this case, the coefficients of the first fundamental form are given by $E=G=1$ and $F=H=I=J=0$. With these coordinates, given a unit tangent vector $\textbf{u}\in C_q$ and writing $\textbf{u}=x\partial x+y\partial y+z\partial z$, since $$x^2E(q)+2xyF(q)+y^2G(q)+z^2H(q)+2xzI(q)+2yzJ(q)=1$$
we have $x^2+y^2=1$, that is, $C_q$ is a  unit cylinder parallel to the $z$-axis. Hence, fixing an orthonormal frame $\{v_1,v_2,v_3\}$ of $N_pM$
and using (\ref{eq:secondform}), it follows that
$$(x,y,z)\mapsto\sum_{i=1}^{3}(x^2 l_{v_i}+2xym_{v_i}+y^2n_{v_i}+z^2p_{v_i}+2xzq_{v_i}+2yzr_{v_i})v_i$$
%
is a parametrisation for curvature locus $\Delta_p$ in the normal space, where $x^2+y^2=1$.
\end{rem}


The next result presents a partition of all corank 1 map germs $f:(\mathbb{R}^3,0)\rightarrow(\mathbb{R}^5,0)$ according to their 2-jet under the action of $\mathcal{A}^2$, which denotes the space of 2-jets of diffeomorphisms in source and target. We denote by $J^2(3,5)$ the subspace of 2-jets $j^2f(0)$ of map germs $f:(\mathbb{R}^3,0)\rightarrow(\mathbb{R}^5,0)$ and by $\Sigma^1J^2(3,5)$ the subset of 2-jets of corank 1.

\begin{prop}\label{prop:orbitas}
There exist six orbits in $\Sigma^1J^2(3,5)$ under the action of $\mathcal{A}^2$, which are $$(x,y,xz,yz,z^2),\,(x,y,z^2,xz,0),\,(x,y,xz,yz,0),\,(x,y,z^2,0,0),\,(x,y,xz,0,0),\,(x,y,0,0,0).$$
\end{prop}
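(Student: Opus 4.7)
The strategy is to put an arbitrary corank $1$ $2$-jet into normal form by stepwise reductions under $\mathcal{A}^2$.

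First, since $df(0)$ has rank $2$, I would apply $\mathcal{A}^1 = GL(3) \times GL(5)$ to reduce $j^1 f(0)$ to $(x, y, 0, 0, 0)$. Then, using the quadratic part of a source diffeomorphism of the form $\Phi(x, y, z) = (x + \phi_1, y + \phi_2, z + \phi_3)$ with $\phi_i$ homogeneous of degree $2$, I kill the quadratic parts of the first two components of $f$, obtaining
\begin{equation*}
f(x, y, z) = (x, y, p_3, p_4, p_5), \qquad p_i \text{ a quadratic form}.
\end{equation*}
Next, target diffeomorphisms of the form $\psi(u) = (u_1, u_2, u_3 + q_3(u), u_4 + q_4(u), u_5 + q_5(u))$ add to each $p_i$ an arbitrary quadratic in $x, y$ alone, since modulo order $3$ we have $q_i(x, y, p_3, p_4, p_5) \equiv q_i(x, y, 0, 0, 0)$. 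Hence I may further assume that each $p_i$ involves only terms containing $z$; equivalently, $p_i = z L_i$ with $L_i = a_i z + 2 b_i x + 2 c_i y$ a linear form, for $i = 3, 4, 5$.

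The remaining $\mathcal{A}^2$-freedom reduces to: (i) a target $GL(3)$ acting on $(u_3, u_4, u_5)$, which mixes $(L_3, L_4, L_5)$ by linear combinations; (ii) source $GL(2)$ on $(x, y)$, compensated by $(u_1, u_2) \mapsto (A')^{-1}(u_1, u_2)$ in the target; (iii) source scalings $z \mapsto \lambda z$ and shears $z \mapsto z + s x + t y$. A direct calculation shows that (ii) and (iii) preserve the $2$-plane $\langle x, y \rangle$ inside the space of linear forms on $\mathbb{R}^3$ (shears only shift $L_i$ by a multiple of $s x + t y$). Therefore the pair
\begin{equation*}
d := \dim \langle L_3, L_4, L_5\rangle, \qquad \epsilon := \begin{cases} 1 & \text{if } \langle L_3, L_4, L_5\rangle \not\subset \langle x, y\rangle, \\ 0 & \text{otherwise,}\end{cases}
\end{equation*}
is an orbit invariant, and the admissible values are precisely $(0,0), (1,0), (1,1), (2,0), (2,1), (3,1)$.

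To finish, for each value of $(d, \epsilon)$ I would exhibit a normal form. If $\epsilon = 1$, I pick an $L_i$ with nonzero $z$-coefficient; a shear kills its $x, y$-components and a scaling normalizes it to $z$. Subtracting suitable multiples of this form from the remaining $L_j$'s via target $GL(3)$ forces them into $\langle x, y\rangle$, where the residual $GL(2) \times GL(3)$ action normalizes them to a subset of $\{x, y\}$. The cases with $\epsilon = 0$ are analogous, using only $GL(2)$ on $(x, y)$ and the target $GL(3)$. Tracking the outcome gives precisely the six listed representatives (with a permutation of the last three target coordinates in case $(3, 1)$ replacing $(x, y, z^2, xz, yz)$ by $(x, y, xz, yz, z^2)$); since each realizes a distinct value of the invariant $(d, \epsilon)$, they are pairwise $\mathcal{A}^2$-inequivalent. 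The main obstacle is bookkeeping the residual subgroup after each partial normalization — in particular, after fixing $L_3 = z$ one must verify that the remaining freedom still suffices to normalize $L_4$ and $L_5$. This is handled by combining each source change with a compensating target $GL(3)$ combination, which is a routine linear-algebra check.
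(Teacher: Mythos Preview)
Your proof is correct and follows essentially the same route as the paper: both first reduce to the form $(x,y,zL_3,zL_4,zL_5)$ and then classify according to the rank of the $3\times 3$ coefficient matrix (your $d$, the paper's $\operatorname{rank}\alpha$) together with the vanishing or not of the $z^2$-coefficients (your $\epsilon$, the paper's condition $(a_{21},b_{21},c_{21})=0$). The only difference is packaging---the paper performs explicit case-by-case target changes, while you identify $(d,\epsilon)$ as an invariant up front and then normalize---but the content is the same.
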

\begin{proof}
Consider the $2$-jet

$$
\begin{array}{cl}
j^2f(0)=&(x,y,a_{20}x^2+a_{11}xy+a_{02}y^2+a_{21}z^2+a_{22}xz+a_{12}yz,b_{20}x^2+b_{11}xy+b_{02}y^2\\
&+b_{21}z^2+b_{22}xz+b_{12}yz,c_{20}x^2+c_{11}xy+c_{02}y^2+c_{21}z^2+c_{22}xz+c_{12}yz).
\end{array}
$$
Using coordinate changes in the target we can remove the terms with $x^2$, $y^2$ and $xy$ of the three coordinates of $j^2f(0)$ and thus we obtain $$j^2f(0)\sim (x,y,a_{21}z^2+a_{22}xz+a_{12}yz,b_{21}z^2+b_{22}xz+b_{12}yz,
c_{21}z^2+c_{22}xz+c_{12}yz).$$
Let
\begin{equation}\label{Det}
\alpha=\left(
                    \begin{array}{ccc}
                      a_{21} & a_{22} & a_{12} \\
                      b_{21} & b_{22} & b_{12} \\
                      c_{21} & c_{22} & c_{12} \\
                    \end{array}
                  \right)\ \mbox{and}\ D=\det(\alpha).
\end{equation}
Suppose that $D\neq0$. Thus $(a_{21},b_{21},c_{21})\neq 0$ and
without loss of generality we can suppose $c_{21}\neq0$. Making the change of coordinates in the target $$\widetilde{X}=X,\,\,\,\widetilde{Y}=Y,\,\,\,\widetilde{Z}=Z-\frac{a_{21}}{c_{21}}T,\,\,\,\widetilde{W}=W-\frac{b_{21}}{c_{21}}T\,\,\,\mbox{and}\,\,\,\widetilde{T}=T,$$

\noindent we have

$
\begin{aligned}
 j^2f(0)&\sim\Big(x,y,\dfrac{(a_{22}c_{21}-a_{21}c_{22})}{c_{21}}xz+\dfrac{(a_{12}c_{21}-a_{21}c_{12})}{c_{21}}yz,\dfrac{(b_{22}c_{21}-b_{21}c_{22})}{c_{21}}xz\\
 & + \dfrac{(b_{12}c_{21}-b_{21}c_{12})}{c_{21}}yz,c_{21}z^2+c_{22}xz+c_{12}yz\Big).
\end{aligned}.
$

Since $D\neq0$, we can also assume $(b_{12}c_{21}-b_{21}c_{12})\neq0$, $(a_{22}c_{21}-a_{21}c_{22})\neq0$
and making the change  of coordinates $$\overline{X}=X,\,\,\,\overline{Y}=Y,\,\,\,\overline{Z}=Z-\frac{(a_{12}c_{21}-a_{21}c_{12})}{b_{12}c_{21}-b_{21}c_{12}}W,\,\,\,
\overline{W}=W-\frac{(b_{22}c_{21}-b_{21}c_{22})}{a_{22}c_{21}-a_{21}c_{22}}Z\,\,\,\mbox{and}\,\,\,\overline{T}=T,$$ we obtain

$$j^2f(0)\sim\left(x,y, \frac{D}{b_{12}c_{21}-b_{21}c_{12}}xz,\frac{D}{a_{22}c_{21}-a_{21}c_{22}}yz,c_{21}z^2+c_{22}xz+c_{12}yz\right).$$

Finally,  with  changes of coordinates in the target we get $j^2f(0)=(x,y,xz,yz,z^2)$.

On the other hand, if $D=0$, then $Z$, $W$ and $T$ are linearly dependent, i.e., there exist $A,\ ,B,\,C\neq0$ such that $AZ+BW+CT=0$. Therefore, taking
$$\widetilde{X}=X,\,\,\,\widetilde{Y}=Y,\,\,\,\widetilde{Z}=Z,\,\,\,\widetilde{W}=W\,\,\,\mbox{and}\,\,\,\widetilde{T}=AZ+BW+CT,$$ we obtain $$j^2f(0)\sim(x,y,a_{21}z^2+a_{22}xz+a_{12}yz,b_{21}z^2+b_{22}xz+b_{12}yz,0).$$

Consider $\alpha_2=\left(
                     \begin{array}{ccc}
                       a_{21} & a_{22} & a_{12} \\
                      b_{21} & b_{22} & b_{12}
                     \end{array}
                   \right)
$ and suppose that rank $(\alpha_2)=2$. We have the following possibilities:
\item(i) If $(a_{21},b_{21})\neq(0,0)$, $j^2f(0)\sim(x,y,z^2,xz,0)$. Indeed, without loss of generality  we can suppose $a_{21}\neq0$. Making $$\overline{X}=X,\,\,\,\overline{Y}=Y,\,\,\,\overline{Z}=Z,\,\,\,
\overline{W}=W-\frac{b_{21}}{a_{21}}Z\,\,\,\mbox{and}\,\,\,\overline{T}=T,$$ we get $$j^2f(0)\sim(x,y,a_{21}z^2+a_{22}xz+a_{12}yz,\widetilde{b}_{22}xz+\widetilde{b}_{12}yz,0)$$ and with a change in the source $$x=X',\,\,y=Y'\,\,\mbox{and}\,\,z=Z'-\frac{a_{22}}{2a_{21}}x-\frac{a_{12}}{2a_{21}}y,$$ we get $j^2f(0)\sim(x,y,z^2,\widetilde{b}_{22}xz+\widetilde{b}_{12}yz,0)$. Since rank$(\alpha_2)=2$, we can suppose $\widetilde{b}_{22}\neq0$ and making a change in the source, we get $j^2f(0)\sim(x-\frac{\widetilde{b}_{12}}{\widetilde{b}_{22}}y,y,z^2,xz,0)$. Finally with one more change of coordinates in the target we have the result.

\item(ii) If $(a_{21},b_{21})=(0,0)$, using the fact that rank$(\alpha_2)=2$ and using appropriate changes we get $j^2f(0)\sim(x,y,xz,yz,0)$.

When $D=0$ and rank$(\alpha)=1$, we can take $j^2f(0)\sim(x,y,a_{21}z^2+a_{22}xz+a_{12}yz,0,0)$. In such case we obtain $j^2f(0)\sim(x,y,z^2,0,0)$ and $j^2f(0)\sim(x,y,xz,0,0)$. Lastly, when $D=0$ and rank$(\alpha)=0$, we have $j^2f(0)\sim(x,y,0,0,0)$.

Therefore, we obtain the desired six $\mathcal{A}^2$-orbits in $\Sigma^1J^2(3,5)$.
  \end{proof}

\begin{defn}
Let $M$ be a 3-manifold with a singularity of corank 1 at $p\in M$. We say that $p$ is non degenerate if $D(p)\neq0$.
\end{defn}

\begin{rem}\label{rem:obs orbitas}
The topological type of the curvature parabola for corank 1 surfaces in $\mathbb{R}^n$, $n=3,4$ is a complete invariant for the $\mathcal{A}^2$-classification of 2-jets in $\Sigma^1J^2(2,n)$, ie, it distinguishes the four orbits in both cases, as shown in Theorem 3.6 and Theorem 2.5 of \cite{Pedro/Cidinha/Raul,luciana}, respectively. However, in the case of a singular 3-manifold in $\mathbb{R}^5$, we do not have a similar result. For example, if $j^2f(0)\sim_{\mathcal{A}^2} (x,y,xz,yz,z^2)$, then this does not imply that the curvature locus $\Delta_p$ is always a paraboloid. Indeed, if $f(x,y,z)=(x,y,xz+y^2,yz,z^2)$ so that  $j^2f(0)\sim_{\mathcal{A}^2} (x,y,xz,yz,z^2)$, but the curvature locus $\Delta_p=\{(0,0,2b^2+2ac,2bc,2c^2)\,|\,a^2+b^2=1\}$ is not a paraboloid. Figure \ref{rem-sing} illustrates the curvature locus $\Delta_p$.


%
%

\begin{figure}[!htb]\hspace{2.5cm}
\begin{minipage}[b]{0.3\linewidth}
\includegraphics[scale=0.2]{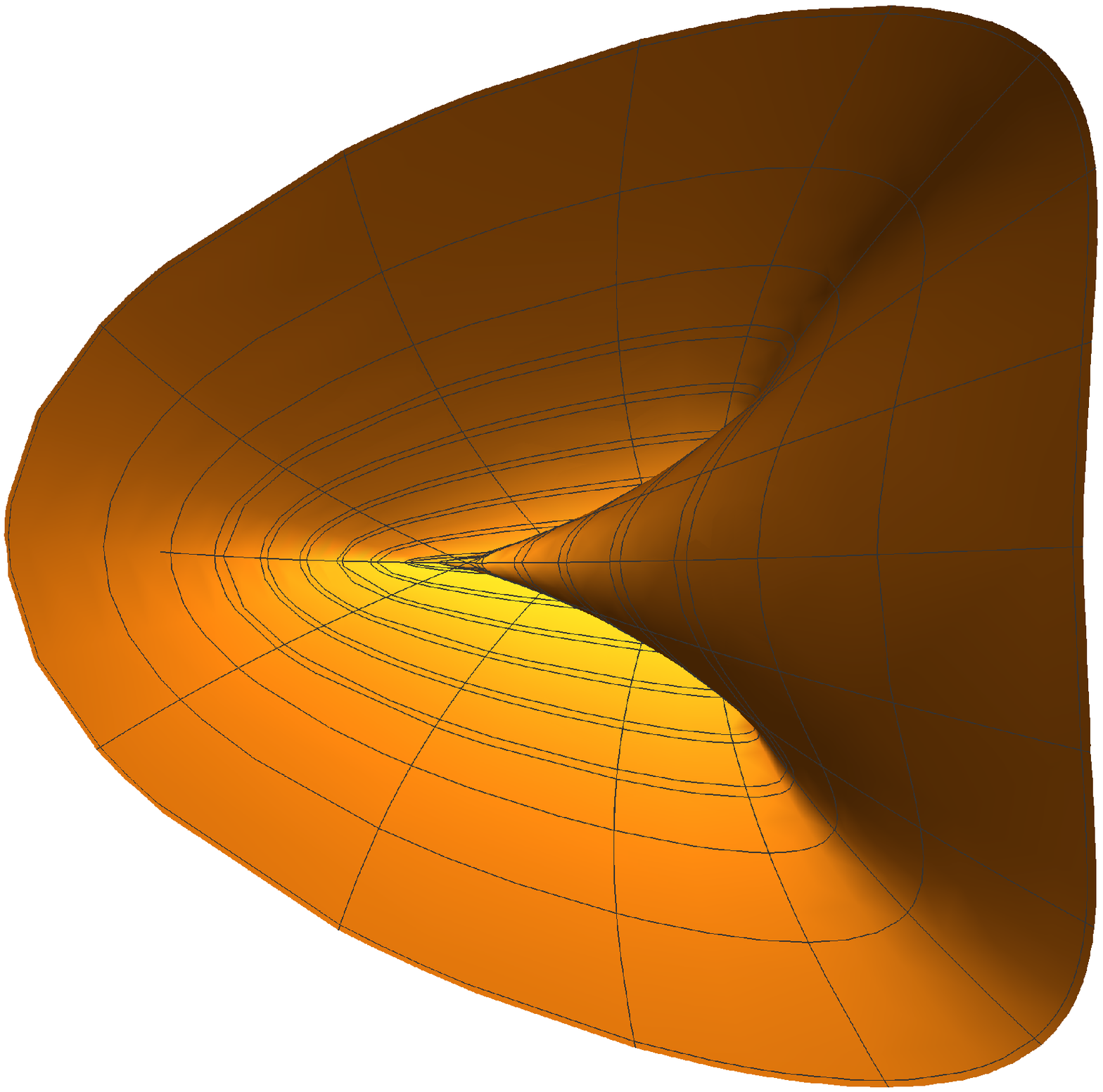}
\end{minipage} \hfill
\begin{minipage}[b]{0.3\linewidth}\hspace{-2cm}
\includegraphics[scale=0.2]{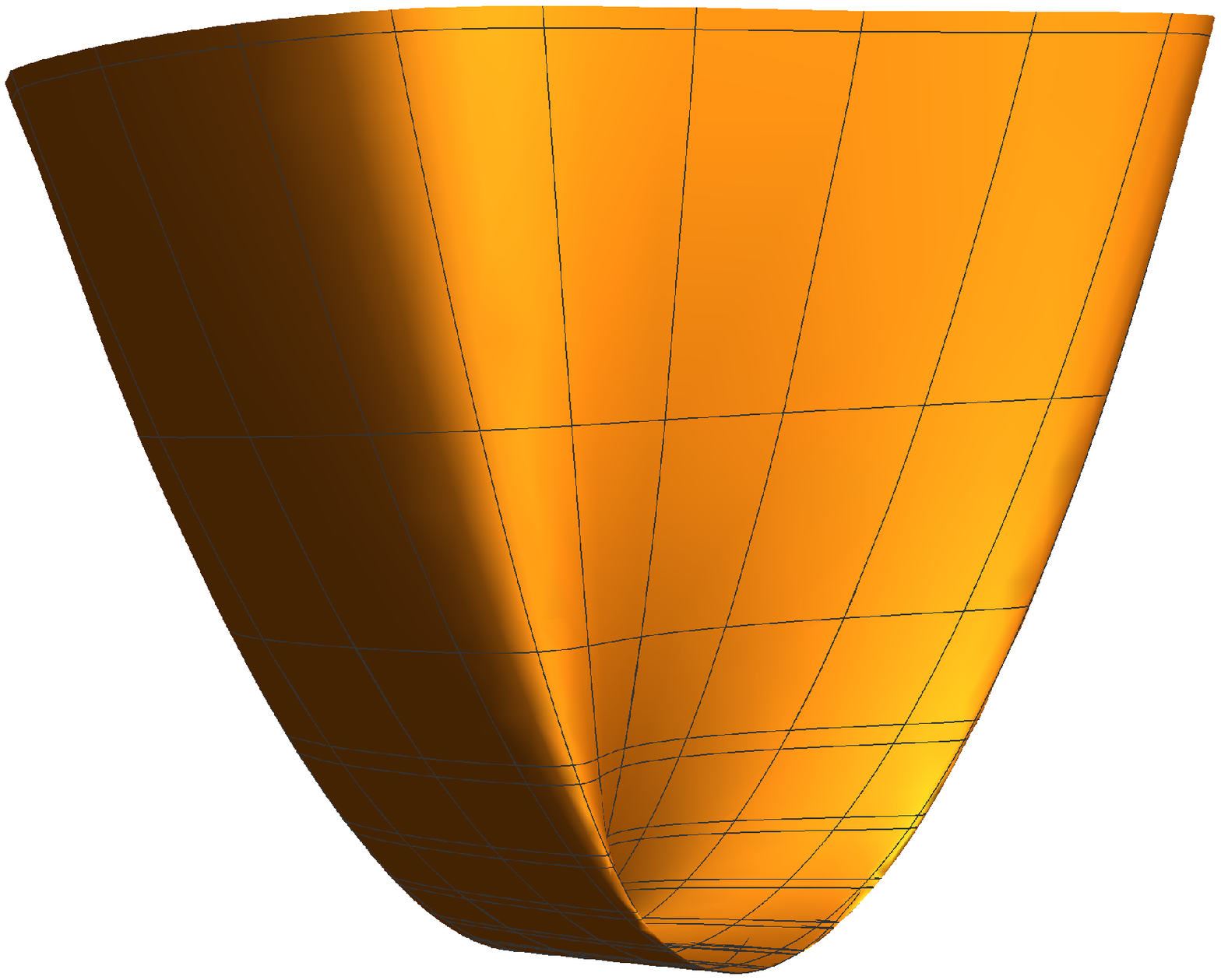}
\end{minipage}
\caption{Top and front views of $\Delta_p$ in Remark \ref{rem:obs orbitas}}
\label{rem-sing}
\end{figure}

\end{rem}

\begin{rem}\label{rem:classification}
Given the $2$-jet

$$\begin{array}{ll}
    j^2f(0)= & (x,y,a_{20}x^2+a_{11}xy+a_{02}y^2+a_{21}z^2+a_{22}xz+a_{12}yz,b_{20}x^2+b_{11}xy+b_{02}y^2+b_{21}z^2 \\
     & +b_{22}xz+b_{12}yz,c_{20}x^2+c_{11}xy+c_{02}y^2+c_{21}z^2+c_{22}xz+c_{12}yz),
  \end{array}
$$

\noindent of corank 1, Table \ref{condicoes} gives conditions on the coefficients
to identify when the $2$-jet is equivalent to one of the six normal forms of Proposition \ref{prop:orbitas},
where  $D=\det(\alpha)$ with $\alpha=\left(
                    \begin{array}{ccc}
                      a_{21} & a_{22} & a_{12} \\
                      b_{21} & b_{22} & b_{12} \\
                      c_{21} & c_{22} & c_{12} \\
                    \end{array}
                  \right)$,
as in (\ref{Det}).

\begin{table}[h]
\caption{Conditions over the coefficients of the $2$-jet for the $\mathcal{A}^{2}$-classification}
\centering
{\begin{tabular}{ccc}
\hline
$\mathcal{A}^{2}$-normal form & Conditions\\
\hline
$(x,y,xz,yz,z^2)$ &  $D\neq 0$ \\ \cr
$(x,y,z^2,xz,0)$ & $ D=0, \,\,\,  rank (\alpha)=2\,\,\, \mbox{and}\,\,\, a_{21}^2+b_{21}^2+c_{21}^2>0$ \\ \cr
$(x,y,xz,yz,0)$ & $D=0, \,\,\,  rank (\alpha)=2\,\,\, \mbox{and}\,\,\, a_{21}=b_{21}=c_{21}=0$\\ \cr
$(x,y,z^2,0,0)$ & $D=0, \,\,\,  rank (\alpha)=1\,\,\, \mbox{and}\,\,\, a_{21}^2+b_{21}^2+c_{21}^2>0$\\ \cr
$(x,y,xz,0,0)$ & $D=0, \,\,\,  rank (\alpha)=1\,\,\, \mbox{and}\,\,\, a_{21}=b_{21}=c_{21}=0$ \\ \cr
$(x,y,0,0,0)$ & $a_{21}= a_{22}=a_{12}=b_{21}= b_{22}=b_{12}=c_{21}= c_{22}=c_{12}=0$. \cr
\hline
\end{tabular}
}
\label{condicoes}
\end{table}


\end{rem}

Although, in general, the topological type of the curvature locus of corank $1$ $3$-manifolds in $\mathbb{R}^5$ is not an invariant of the $\mathcal{A}^2$-classification in $\Sigma^1J^2(3,5)$, we do have a partial result for a special class of corank $1$ $3$-manifold in $\mathbb{R}^5$.

\begin{theo}\label{teo-partial}
Let $M\subset\mathbb{R}^5$ be a corank $1$ $3$-manifold at the origin $p$, locally parametrised by $f:(\mathbb{R}^3,0)\rightarrow(\mathbb{R}^5,0)$, such that
$$f(x,y,z)=(x,y,a_{21}z^2+a_{22}xz+a_{12}yz,b_{21}z^2+b_{22}xz+b_{12}yz,c_{21}z^2+c_{22}xz+c_{12}yz)+o(3).$$
Then,
\begin{itemize}
\item[(i)] $j^2f(0)\sim_{\mathcal{A}^2}(x,y,xz,yz,z^2)\Leftrightarrow\Delta_p$ is a substantial surface;
\item[(ii)] $j^2f(0)\sim_{\mathcal{A}^2}(x,y,z^2,xz,0)\Leftrightarrow\Delta_p$ is a planar region;
\item[(iii)] $j^2f(0)\sim_{\mathcal{A}^2}(x,y,xz,yz,0)\Leftrightarrow\Delta_p$ is a plane;
\item[(iv)] $j^2f(0)\sim_{\mathcal{A}^2}(x,y,z^2,0,0)\Leftrightarrow\Delta_p$ is a half-line;
\item[(v)] $j^2f(0)\sim_{\mathcal{A}^2}(x,y,xz,0,0)\Leftrightarrow\Delta_p$ is a line;
\item[(vi)] $j^2f(0)\sim_{\mathcal{A}^2}(x,y,0,0,0)\Leftrightarrow\Delta_p$ is a point.
\end{itemize}
\end{theo}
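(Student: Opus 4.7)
The plan is to parametrise $\Delta_p$ explicitly and identify it, case by case, as the image of a paraboloid under the linear map with matrix $2\alpha$. Under the hypothesis on $f$, the 2-jet has no $x^2$, $xy$ or $y^2$ terms in the last three coordinates, so $f_{xx}$, $f_{xy}$, $f_{yy}$ vanish at $\phi(q)$, while $f_{zz}$, $f_{xz}$, $f_{yz}$ already lie in $N_pM$. The first fundamental form therefore reduces to $E=G=1$, $F=H=I=J=0$, and by Remark~\ref{obs:cilindro} the set $C_q$ is the cylinder $x^2+y^2=1$. Choosing the orthonormal frame $\{e_3,e_4,e_5\}$ of $N_pM$, one then reads off
\[
\Delta_p = \bigl\{\,2\alpha\,(z^2,xz,yz)^{T}\,:\,x^2+y^2=1,\ z\in\mathbb{R}\bigr\}.
\]
Since the map $(x,y,z)\mapsto(z^2,xz,yz)$ restricted to $x^2+y^2=1$ covers exactly the paraboloid $P=\{(u,v,w)\in\mathbb{R}^3:u=v^2+w^2\}$, we obtain $\Delta_p=2\alpha(P)$, and the topology of $\Delta_p$ is determined by how the linear map $2\alpha$ acts on $P$.

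I would then split by $\mathrm{rank}(\alpha)$ together with the vanishing of the first column $(a_{21},b_{21},c_{21})^{T}$: these are exactly the conditions recorded in Remark~\ref{rem:classification} distinguishing the six $\mathcal{A}^2$-orbits. If $D\neq 0$, then $2\alpha$ is a linear isomorphism and $\Delta_p$ is affinely equivalent to $P$, a substantial surface (case~(i)); if $\alpha=0$, then $\Delta_p=\{0\}$ (case~(vi)). If $\mathrm{rank}(\alpha)=2$, then $\Delta_p$ is the projection of $P$ along the line $\ell=\ker\alpha$ onto the plane $\mathrm{Im}(\alpha)$. When the first column of $\alpha$ vanishes, $\ell$ contains the $u$-axis and, since $P$ is a graph over the $(v,w)$-plane, the projection sweeps $\mathrm{Im}(\alpha)$ entirely, yielding a plane (case~(iii)); otherwise $\ell$ is transverse to the $u$-axis and the projection of the convex set $P$ along $\ell$ is a proper convex region of $\mathrm{Im}(\alpha)$ whose boundary is the projection of the silhouette $P\cap\{bv+cw=a/2\}$ (with $\ell=\langle(a,b,c)\rangle$), a parabola, giving the planar region of case~(ii).

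For $\mathrm{rank}(\alpha)=1$, I would write $\alpha=\lambda\mu^{T}$ with $\lambda\in N_pM\setminus\{0\}$ and $\mu=(\mu_1,\mu_2,\mu_3)^{T}\neq 0$, so $\Delta_p=2\lambda\cdot h(C_q)$ with $h(x,y,z)=z(\mu_1z+\mu_2x+\mu_3y)$. When the first column vanishes, $\mu_1=0$ and $h=z(\mu_2x+\mu_3y)$; since $\mu_2x+\mu_3y$ ranges over the nondegenerate interval $[-R,R]$ with $R=\sqrt{\mu_2^2+\mu_3^2}>0$ and $z\in\mathbb{R}$ is free, $h(C_q)=\mathbb{R}$ and $\Delta_p$ is a line (case~(v)). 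When $\mu_1\neq 0$, an elementary analysis of $\mu_1z^2+sz$ for $(z,s)\in\mathbb{R}\times[-R,R]$ (with $R\geq 0$) shows that $h(C_q)$ is a ray (with endpoint $-R^2/(4\mu_1)$ when $\mu_1>0$), so $\Delta_p$ is a half-line (case~(iv)).

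The six topological types obtained are pairwise distinct, and the six conditions on $\alpha$ used above exhaust $\Sigma^1J^2(3,5)$ by Proposition~\ref{prop:orbitas} and Remark~\ref{rem:classification}, so each $\Leftrightarrow$ in the theorem follows by exhaustion. The main obstacle is case~(ii): one must verify that $\Delta_p$ is a \emph{proper} subset of $\mathrm{Im}(\alpha)$, which the silhouette/convexity argument above handles using that $\ell$ has a nontrivial $(v,w)$-component and that $P$ is bounded below in the $u$-direction by $v^2+w^2$. A minor subtlety is confirming in the rank-$1$ case with $\mu_1\neq 0$ that $h(C_q)$ is truly a ray and not the entire real line, which reduces to the elementary quadratic estimate indicated above.
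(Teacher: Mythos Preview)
Your argument is correct and uses the same case split as the paper (by $\mathrm{rank}(\alpha)$ together with whether the first column $(a_{21},b_{21},c_{21})^{T}$ vanishes, exactly the dichotomy of Remark~\ref{rem:classification}), but your organizing identity $\Delta_p = 2\alpha(P)$ with $P=\{(u,v,w):u=v^2+w^2\}$ is a cleaner packaging that the paper does not isolate. The paper instead writes out the parametrisation of $\Delta_p$ coordinate by coordinate: in the rank-$2$ case it distinguishes (ii) from (iii) by observing that, when some $a_{21}\neq0$, the scalar function $2a_{21}z^2+2z(a_{22}x+a_{12}y)$ with $(x,y)$ on the unit circle is bounded on one side, so one coordinate of $\Delta_p$ misses values and the plane is not filled, while when the first column vanishes the parametrisation becomes $z$ times a closed curve and sweeps the whole plane. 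Your projection/silhouette description yields the same conclusions and additionally identifies the boundary of the planar region as a parabola. The rank-$1$ analysis and the extreme cases (i), (vi) are handled the same way in both proofs, and your exhaustion argument for the converses matches the paper's implicit use of the mutual exclusivity of the six types. One minor imprecision: $P$ itself is a surface, not a convex set; what you are really using is that for $\ell=\langle(a,b,c)\rangle$ with $(b,c)\neq(0,0)$ the intersection of a line parallel to $\ell$ with $P$ is governed by a genuine quadratic whose discriminant is negative for some lines, which is exactly the computation you indicate.
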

\begin{proof}
Table \ref{condicoes} provides conditions over the coefficients on the parametrisation $f$ in order to determine the $\mathcal{A}^2$-orbit
of its $2$-jet.
Taking $f$ as above and the orthonormal frame $\{e_3,e_4,e_5\}$ of $N_pM$, the matrix of the second fundamental form at the origin $p$ is
$$
\left(
  \begin{array}{cccccc}
    0 & 0 & 0 & 2a_{21} & a_{22} & a_{12} \\
    0 & 0 & 0 & 2b_{21} & b_{22} & b_{12} \\
    0 & 0 & 0 & 2c_{21} & c_{22} & c_{12} \\
  \end{array}
\right)
$$
and also $\mbox{rank}(\alpha)=\mbox{rank}(\mathcal{II})$.

The curvature locus $\Delta_p$ is a substantial surface iff $D\neq0$. Indeed, $D=0$ if and only if $\mbox{rank}(\mathcal{II})\leq2$, which is equivalent to the image of the second fundamental form being contained in a plane. Hence, $j^2f(0)\sim_{\mathcal{A}^2}(x,y,xz,yz,z^2)$ iff $\Delta_p$ is a substantial surface.

When $D=0$ and the rank of the second fundamental form is $2$, $f$ can be written as
$$f(x,y,z)=(x,y,f_1,f_2,\lambda f_1+\mu f_2),$$
where $\lambda,\mu\in\mathbb{R}$ and $f_i\in\mathcal{M}_3^2$ with no quadratic terms in $x^2$, $xy$ or $y^2$. In this case, $\Delta_p$ can be parametrised by
$$\eta(x,y,z)=(\underbrace{2a_{21}z^2+2a_{22}xz+2a_{12}yz}_{a(x,y,z)},\underbrace{2b_{21}z^2+2b_{22}xz+2c_{12}yz}_{b(x,y,z)},\lambda a(x,y,z)+\mu b(x,y,z))$$
where $x^2+y^2=1$ and $z\in\mathbb{R}$.
Hence the curvature locus is contained in the plane generated by the vectors $(1,0,\lambda)$ and $(0,1,\mu)$ in $N_pM$.

In the previous conditions, consider $a_{21}^2+a_{22}^2+a_{12}^2>0$. Suppose $a_{21}\neq0$ and write
$$a(x,y,z)=a_{21}z^2+z(a_{22}x+a_{12}y).$$
Since $(a_{22}x+a_{12}y)$ is bounded and $a_{21}\neq0$, the image of the function $a$ does not cover all the real. The same analysis can be applied to
the other functions and $\Delta_p$ does not cover the whole plane. Therefore $\Delta_p$ is a planar region. However, if $a_{21}=a_{22}=a_{12}=0$, the curvature locus is parametrised by
$$\eta(x,y,z)=z^2(\underbrace{2a_{22}x+2a_{12}y}_{a(x,y)},\underbrace{2b_{22}x+2c_{12}y}_{b(x,y)},\lambda a(x,y)+\mu b(x,y))$$
where $z\in\mathbb{R}$ and the second part is a closed curve. Hence, $\Delta_p$ is the whole plane.

When $\mbox{rank}(\mathcal{II})=1$, the image os the second fundamental form is contained in a line and the proof of the cases (iv) and (v) are analogous to the previous. Finally, $\mbox{rank}(\mathcal{II})=0$ iff $\Delta_p$ is only a point.
\end{proof}

Given $M\subset\mathbb{R}^5$ a corank $1$ $3$-manifold at the origin $p$, locally parametrised by $f:(\mathbb{R}^3,0)\rightarrow(\mathbb{R}^5,0)$
it is possible to write the $2$-jet of $f$ at the origin, $j^2f(0)$, in the special form of Theorem \ref{teo-partial} just using the action of the group
$\mathcal{A}^2$.

\begin{ex}
The previous result is not true in general. Consider, for instance, the corank 1 $3$-manifold
given by $f(x,y,z)=(x,y,x^2+z^2,xy+xz,y^2)$. Notice that $j^2f(0)$ is $\mathcal{A}^2$-equivalent
to $(x,y,z^2,xz,0)$, nevertheless, $\Delta_p$ is a substantial surface.
\end{ex}

We denote by $\mathcal{R}^2$ the group of $2$-jets of diffeomorphisms from $(\mathbb{R}^3,0)$ to $(\mathbb{R}^3,0)$ and by $\mathcal{O}(5)$ the group of linear isometries of $\mathbb{R}^5$. Then, $\mathcal{R}^2\times \mathcal{O}(5)$ is a subgroup of $\mathcal{A}^2$ which also acts on $\Sigma^1J^2(3,5)$, the subspace of $2$-jets of corank 1.

We list below the parametrisations obtained with changes of coordinates in source and target preserving the geometry of the image.
To obtain such parametrisations we use the classification given in Remark \ref{rem:classification}.

\begin{prop}\label{prop:formanormal}
Let $f:(\mathbb{R}^3,0)\rightarrow (\mathbb{R}^5,0)$ be a corank 1 map germ. Using smooth changes of coordinates in the source and isometries in the target, we can reduce $f$ to the form
\begin{itemize}
\item [\rm{(a)}] $(x,y,z)\mapsto (x,y,a_1x^2+a_3y^2+xz+a_6yz+p(x,y,z),b_1x^2+b_2xy+b_3y^2+b_6yz+q(x,y,z),c_1x^2+c_2xy+c_3y^2+c_4z^2+c_5xz+c_6yz+r(x,y,z))$ if $j^2f(0)\sim_{\mathcal{A}^2} (x,y,xz,yz,z^2)$;

\item [\rm{(b)}] $(x,y,z)\mapsto (x,y,a_1x^2+a_2xy+a_3y^2+a_4z^2+a_5xz+p(x,y,z),b_1x^2+b_2xy+b_3y^2+xz+q(x,y,z),c_1x^2+c_2xy+c_3y^2+r(x,y,z))$ if  $j^2f(0)\sim_{\mathcal{A}^2}  (x,y,z^2,xz,0)$;

\item [\rm{(c)}] $(x,y,z)\mapsto (x,y,a_1x^2+a_3y^2+xz + p(x,y,z),b_1x^2+b_3y^2+b_5xz+b_6yz+q(x,y,z),c_1x^2+c_2xy+c_3y^2+r(x,y,z))$ if  $j^2f(0)\sim_{\mathcal{A}^2}  (x,y,xz,yz,0)$;

\item [\rm{(d)}] $(x,y,z)\mapsto (x,y,a_1x^2+a_2xy+a_3y^2+z^2+p(x,y,z),b_1x^2+b_2xy+b_3y^2+q(x,y,z),c_1x^2+c_2xy+c_3y^2+r(x,y,z))$ if  $j^2f(0)\sim_{\mathcal{A}^2}  (x,y,z^2,0,0)$;

\item [\rm{(e)}] $(x,y,z)\mapsto (x,y,a_3y^2+xz+p(x,y,z),b_1x^2+b_2xy+b_3y^2+q(x,y,z),c_1x^2+c_2xy+c_3y^2+r(x,y,z))$ if  $j^2f(0)\sim_{\mathcal{A}^2}  (x,y,xz,0,0)$;

\item [\rm{(f)}] $(x,y,z)\mapsto (x,y,a_1x^2+a_2xy+a_3y^2+p(x,y,z),b_1x^2+b_2xy+b_3y^2+q(x,y,z),c_1x^2+c_2xy+c_3y^2+r(x,y,z))$ if  $j^2f(0)\sim_{\mathcal{A}^2}  (x,y,0,0,0)$,\\
\end{itemize}
\noindent where $a_i,b_i,c_i$, $i=1,\ldots,6$ are constants, $a_4,c_4>0$, $b_6\neq0$ and $p,q,r\in\mathcal{M}_3^3$.
\end{prop}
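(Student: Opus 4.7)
The plan is to perform a case-by-case normal form reduction for each of the six $\mathcal{A}^2$-orbits in $\Sigma^1 J^2(3,5)$ identified in Proposition \ref{prop:orbitas}, now using only the smaller group of source diffeomorphisms and target isometries. First I would apply the preprocessing of Remark \ref{obs:cilindro} to write $f$ as $(x, y, f_1, f_2, f_3)$ with $(f_i)_x(0) = (f_i)_y(0) = (f_i)_z(0) = 0$ for $i = 1, 2, 3$; this uses a source change of coordinates and a target rotation, both of which are already allowed. After this step the tangent plane at $p$ is the $(x, y)$-plane and the normal space is the span of the last three target coordinates, and the remaining admissible operations are source changes that preserve the first two components modulo compensating target $O(2)$ rotations in $(x, y)$, combined with target $O(3)$ rotations in the normal space.

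For each case, the $\mathcal{A}^2$-orbit is detected by the rank and structure of the $3 \times 3$ matrix $\alpha$ of $(z^2, xz, yz)$-coefficients, as tabulated in Remark \ref{rem:classification}. The key observation is that $O(3)$ in the normal space acts on $\alpha$ by left multiplication on its rows, so by a QR-type decomposition we can put $\alpha$ into a prescribed staircase pattern with pivots placed in chosen columns. A source rescaling $z \mapsto \lambda z$ then normalises a leading coefficient (the sign constraints $a_4, c_4 > 0$ arise because the $z^2$-entry, once rotated into a single row, is absorbed by a square after rescaling), and source translations of the form $(x, y, z) \mapsto (x, y, z + \psi(x,y))$ absorb specific cross-terms such as the $xy$-coefficient in the distinguished row carrying the $xz$-pivot.

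In case (a), $D \neq 0$ forces $\alpha$ to have full rank and the staircase places pivots in the positions $xz$, $yz$, $z^2$ of the three rows respectively, producing the richest normal form, with the $xz$-pivot normalised to $1$, the $z^2$-pivot rescaled to $c_4 > 0$, and $b_6 \neq 0$ inherited from $D \neq 0$. Cases (b)--(f) follow the same pattern with $\mbox{rank}(\alpha) \in \{2, 2, 1, 1, 0\}$ and with the pivot columns dictated by whether the $z^2$-column of $\alpha$ is non-zero: this is exactly the criterion separating (b) from (c) and (d) from (e) in Remark \ref{rem:classification}. All higher-order contributions are then collected in $p, q, r \in \mathcal{M}_3^3$.

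The main obstacle is the bookkeeping. Unlike the $\mathcal{A}^2$-action used in Proposition \ref{prop:orbitas}, which absorbs any quadratic term in the three normal components by an adjustment of target coordinates, a target isometry acts linearly on the normal space and cannot kill the coefficients of $x^2, xy, y^2$. One must carefully identify which monomials are still removable by the residual $\mathcal{R} \times (O(2) \times O(3))$-action after preprocessing, and verify that the parameters $a_i, b_i, c_i$ displayed in (a)--(f) are exactly the minimal remaining ones; in particular, one must confirm that the source change needed to absorb each cross-term does not reintroduce a coefficient already eliminated in a previous step.
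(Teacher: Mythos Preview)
Your plan is correct and follows essentially the same route as the paper: for case~(a) the paper performs a source translation in $z$, then three explicit Givens rotations $R_1,R_2,R_3$ in the normal $O(3)$ (each with angle an arctangent of a ratio of coefficients), and a final $z$-translation to kill the $xy$-term in the third component, which is exactly your QR-type staircase plus source shift written out entry by entry rather than conceptually. Your explicit mention of a $z$-rescaling to normalise the $xz$-pivot to~$1$ actually fills a step the paper's write-up leaves implicit.
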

\begin{proof}
Let $f$ be a corank 1 map germ such that $j^2f(0)\sim_{\mathcal{A}^2}(x,y,xz,yz,z^2)$, then we have

$$
\begin{array}{ll}
j^2f(0)= & (x,y,a_{20}x^2+a_{11}xy+a_{02}y^2+a_{21}z^2+a_{22}xz+a_{12}yz,b_{20}x^2+b_{11}xy+b_{02}y^2\\
& +b_{21}z^2+b_{22}xz+b_{12}yz,c_{20}x^2+c_{11}xy+c_{02}y^2+c_{21}z^2+c_{22}xz+c_{12}yz).
\end{array}
$$

So it is possible to take $c_{21}\neq0$. A change of coordinates in the variable $z$, making $z=-\dfrac{c_{22}}{2c_{21}}x-\dfrac{c_{12}}{2c_{21}}y+z'$, transforms $j^2f(0)$ into
$$
\begin{array}{l}
(x,y,\widetilde{a}_{20}x^2+\widetilde{a}_{11}xy+\widetilde{a}_{02}y^2+\widetilde{a}_{21}z^2+\widetilde{a}_{22}xz+\widetilde{a}_{12}yz,\widetilde{b}_{20}x^2+\widetilde{b}_{11}xy+
\widetilde{b}_{02}y^2\\
+\widetilde{b}_{21}z^2+\widetilde{b}_{22}xz+\widetilde{b}_{12}yz,
\widetilde{c}_{20}x^2+\widetilde{c}_{11}xy+\widetilde{c}_{02}y^2+c_{21}z^2).
\end{array}
$$

Since $D\neq0$, we have $(\widetilde{a}_{22})^2+(\widetilde{b}_{22})^2>0$ and $\widetilde{a}_{22}\widetilde{b}_{12}-\widetilde{b}_{22}\widetilde{a}_{12}\neq0$.
Without loss of generality we can assume $\widetilde{a}_{22}\neq0$ and take the rotation in $\mathbb{R}^5$ given by

$$R_1=\left(
    \begin{array}{ccccc}
      1 & 0 & 0 & 0 & 0 \\
      0 & 1 & 0 & 0 & 0 \\
      0 & 0 & \cos(\theta_1) & \sin(\theta_1)  & 0 \\
      0 & 0 & - \sin(\theta_1) & \cos(\theta_1) &   0 \\
      0 & 0 & 0 & 0&1 \\
    \end{array}
  \right)
$$

\noindent through the angle $\theta_1=\arctan\left(\dfrac{\widetilde{b}_{22}}{\widetilde{a}_{22}}\right)$, that transforms $j^2f(0)$ into

$$
\begin{array}{l}
(x,y,{a}'_{20}x^2+{a}'_{11}xy+{a}'_{02}y^2+{a}'_{21}z^2+{a}'_{22}xz+{a}'_{12}yz,{b}'_{20}x^2+{b}'_{11}xy+
{b}'_{02}y^2+{b}'_{21}z^2+{b}'_{12}yz,\\
\widetilde{c}_{20}x^2+\widetilde{c}_{11}xy+\widetilde{c}_{02}y^2+c_{21}z^2),
\end{array}
$$
where ${a}'_{22}>0$ and ${b}'_{12}\neq0$.

Now let
$$R_2=\left(
    \begin{array}{ccccc}
      1 & 0 & 0 & 0 & 0 \\
      0 & 1 & 0 & 0 & 0 \\
      0 & 0 & 1 & 0 & 0 \\
      0 & 0 & 0 & \cos(\theta_2) &-\sin(\theta_2)     \\
      0 & 0 & 0 & \sin(\theta_2) &\cos(\theta_2) \\
    \end{array}
  \right)
$$
\noindent be the rotation through the angle $\theta_2=\arctan\left(\dfrac{{b}'_{21}}{{c}_{21}}\right)$. Thus,
$$
\begin{array}{ll}
j^2f(0)\sim & (x,y,{a}'_{20}x^2+{a}'_{11}xy+{a}'_{02}y^2+{a}'_{21}z^2+{a}'_{22}xz+{a}'_{12}yz,\\
 & {b}''_{20}x^2+{b}''_{11}xy+{b}''_{02}y^2+{b}''_{12}yz,{c}'_{20}x^2+{c}'_{11}xy+{c}'_{02}y^2+c'_{21}z^2),
\end{array}
$$

whit ${b}''_{12}\neq0$.
Furthermore, considering $\theta_3=\arctan\left(\dfrac{{a}'_{21}}{{c}'_{21}}\right)$ and the rotation in $\mathbb{R}^5$

$$R_3=\left(
    \begin{array}{ccccc}
      1 & 0 & 0 & 0 & 0 \\
      0 & 1 & 0 & 0 & 0 \\
      0 & 0 & \cos(\theta_3) & 0 & -\sin(\theta_3) \\
      0 & 0 & 0 & 1 &   0  \\
      0 & 0 & \sin(\theta_3) & 0 &\cos(\theta_3) \\
    \end{array}
  \right),$$
 \noindent we transform $j^2f(0)$ into
$$
\begin{array}{l}
(x,y,\widetilde{\widetilde{a}}_{20}x^2+\widetilde{\widetilde{a}}_{11}xy+\widetilde{\widetilde{a}}_{02}y^2+\widetilde{\widetilde{a}}_{22}xz+\widetilde{\widetilde{a}}_{12}yz,
 {b}''_{20}x^2+{b}''_{11}xy+{b}''_{02}y^2+{b}''_{12}yz,\\
{c}''_{20}x^2+{c}''_{11}xy+{c}''_{02}y^2+c''_{21}z^2+c''_{22}xz+c''_{12}yz),
\end{array}
$$
where $\widetilde{\widetilde{a}}_{22},c''_{21}>0$.
Finally with the change of coordinates in the source $z=-\dfrac{\widetilde{\widetilde{a}}_{11}}{\widetilde{\widetilde{a}}_{22}}y+z'$, we have
$$
\begin{array}{ll}
j^2f(0) \sim & (x,y,a_1x^2+a_3y^2+xz+a_6yz,b_1x^2+b_2xy+b_3y^2+b_6yz,\\
 & c_1x^2+c_2xy+c_3y^2+c_4z^2+c_5xz+c_6yz),
\end{array}
$$
where $c_4>0$ and $b_6\neq0$.
\end{proof}

The next result shows that two $2$-jets of corank 1 map germs $(\mathbb{R}^3,0)\rightarrow (\mathbb{R}^5,0)$ are equivalent under the action $\mathcal{R}^2\times \mathcal{O}(5)$ if and only if there exists an isometry between the normal spaces preserving the respective curvature locus.

\begin{theo}\label{teo:isometria}
Let $M_1,M_2\subset\mathbb{R}^5$ be two $3$-manifolds with a corank 1 singularity at points $p_1\in M_1$ and $p_2\in M_2$, parametrised by $f$ and $g$, respectively. The $2$-jets $j^2f(0)$, $j^2g(0)\in \Sigma^1J^2(3,5)$ are equivalent under the action $\mathcal{R}^2\times \mathcal{O}(5)$ if and only if there is a linear isometry $\phi:N_{p_1}M_1\rightarrow N_{p_2}M_2$ such that $\phi (\Delta_{p_1}(M_1))=\Delta_{p_2}(M_2)$.
\end{theo}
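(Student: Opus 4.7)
The proof splits into two implications, the second being substantially more involved.

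For the $(\Rightarrow)$ direction, suppose there exist $h\in\mathcal{R}^2$ and $\rho\in\mathcal{O}(5)$ with $j^2g(0)=j^2(\rho\circ f\circ h^{-1})(0)$, equivalently $g\circ h=\rho\circ f$ to order two at the origin. Differentiating once gives $dg_0\circ dh_0=\rho\circ df_0$, so $\rho(T_{p_1}M_1)=T_{p_2}M_2$ and, since $\rho$ is an isometry of $\mathbb{R}^5$, also $\rho(N_{p_1}M_1)=N_{p_2}M_2$. I would set $\phi:=\rho|_{N_{p_1}M_1}$. Differentiating twice and projecting onto the normal space kills the correction term $dg_0(d^2h_0(\mathbf{u},\mathbf{u}))\in T_{p_2}M_2$ and yields
\[\mathcal{II}^{g}(dh_0\mathbf{u},dh_0\mathbf{u})=\phi\bigl(\mathcal{II}^{f}(\mathbf{u},\mathbf{u})\bigr).\]
A parallel first-order calculation gives $\mathcal{I}^{g}(dh_0\mathbf{u},dh_0\mathbf{u})=\mathcal{I}^{f}(\mathbf{u},\mathbf{u})$, so $dh_0$ carries $C_{q_1}$ onto $C_{q_2}$. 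Together these identities yield $\phi(\Delta_{p_1})=\Delta_{p_2}$.

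For the $(\Leftarrow)$ direction, I would first apply Remark \ref{obs:cilindro} to put $f$ and $g$ into parametrisations with $E=G=1$ and $F=H=I=J=0$ (an admissible $\mathcal{R}^2\times\mathcal{O}(5)$-change). Then $C_{q_1}=C_{q_2}$ is the standard cylinder $\{x^2+y^2=1\}\subset\mathbb{R}^3$, and $\Delta_{p_i}$ is the image of this cylinder under the quadratic map $\eta_i:\mathbb{R}^3\to N_{p_i}M_i$ whose components, relative to a fixed orthonormal frame of $N_{p_i}M_i$, are read off the matrix of the second fundamental form as in Remark \ref{obs:cilindro}. Relative to these frames $\phi$ is represented by a matrix $A\in O(3)$, and the hypothesis becomes $A\cdot\eta_1(C_q)=\eta_2(C_q)$.

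The crucial step is to promote this equality of images to an equality of polynomial maps, that is, to produce a linear map $L:\mathbb{R}^3\to\mathbb{R}^3$ preserving $\{x^2+y^2=1\}$---necessarily of the form $L(x,y,z)=(R(x,y),\,v^T(x,y)+cz)$ with $R\in O(2)$, $v\in\mathbb{R}^2$ and $c\neq 0$---such that $A\cdot\eta_1=\eta_2\circ L$ as polynomial maps in $(x,y,z)$. This is where I expect the main technical difficulty: in the degenerate cases of Theorem \ref{teo-partial} (planar region, plane, half-line, line, point) the map $\eta_i$ is far from injective on $C_q$ and many inequivalent quadratic forms share the same image, so set equality alone cannot force equality of coefficients. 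The plan is to argue case by case through the six $\mathcal{A}^2$-orbits of Proposition \ref{prop:orbitas}, exploiting the sharper normal forms of Proposition \ref{prop:formanormal}, and to use the rigidity provided by $\phi$ being an isometry between $3$-dimensional normal spaces to match the coefficients up to a cylinder symmetry.

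Once $A$ and $L$ are in hand, I would construct $\rho\in\mathcal{O}(5)$ by gluing the isometry $T_{p_1}M_1\to T_{p_2}M_2$ induced by the $O(2)$-block of $L$ with $\phi$ on the normal spaces, and take $h\in\mathcal{R}^2$ with $dh_0=L$ and quadratic part chosen to absorb the tangential second-order mismatch (solvable since $dg_0$ is surjective onto $T_{p_2}M_2$). The chain-rule identities from the $(\Rightarrow)$ direction then confirm $j^2g(0)=j^2(\rho\circ f\circ h^{-1})(0)$, completing the proof.
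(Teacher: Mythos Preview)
Your $(\Rightarrow)$ argument is correct and in fact cleaner than the paper's: the paper treats this direction by the same explicit coefficient-matching machinery it uses for the converse (setting up a $45$-equation system from $\phi\circ j^2f(0)\circ\psi=j^2g(0)$ and verifying with MAPLE that every solution carries $\Delta_{p_1}$ to $\Delta_{p_2}$), whereas your chain-rule argument gives this implication conceptually and in one stroke.

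For $(\Leftarrow)$ you have correctly located the crux---passing from $A\cdot\eta_1(C_q)=\eta_2(C_q)$ as \emph{sets} to $A\cdot\eta_1=\eta_2\circ L$ as \emph{maps} for some cylinder symmetry $L$---but you have not resolved it; what you have written is a plan, not a proof. The paper does not attempt this promotion abstractly. Instead it first applies Proposition~\ref{prop:formanormal} to bring \emph{both} $j^2f(0)$ and $j^2g(0)$ into the same explicit normal form depending on finitely many real moduli $(a_1,a_3,a_6,b_1,\dots,c_6)$ and $(\bar a_1,\dots,\bar c_6)$, writes down the resulting parametrisations $\eta_1,\eta_2$ of the curvature loci explicitly, and then solves for the orthogonal matrix $\tilde A$ by comparing these parametrisations coefficient-by-coefficient (again aided by MAPLE). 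In the main case $(x,y,xz,yz,z^2)$ this yields only two candidates for $\tilde A$, each forcing a specific sign pattern relating the barred and unbarred moduli; the required $(\psi,\phi)\in\mathcal{R}^2\times\mathcal{O}(5)$ is then read off directly. The remaining five $\mathcal{A}^2$-orbits are handled the same way.

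In short: your outline is sound, but the ``crucial step'' you flag is precisely where the work lies, and the paper's route around it is brute-force normal-form computation rather than a structural argument about cylinder symmetries. If you want to complete your approach you will need, in each orbit, to extract from the shape of $\Delta_p$ (vertex, axes of symmetry, asymptotic directions, etc.) enough isometry-invariant data to pin down the moduli of the normal form; the paper simply bypasses this by letting a computer algebra system do the matching.
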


The proof of this result is analogous to the proofs of Theorem 1.7 in \cite{luciana} and Theorem 3.8 in \cite{Pedro/Cidinha/Raul}. However, the proof is very long and can not be carried out by hand. Therefore, we present it in Appendix \ref{appendix}.

\section{Geometry of the curvature locus}\label{sec:geometria locus}

The aim of this section is to study the geometry of the curvature locus of a singular 3-manifold in $\mathbb{R}^5$. For this we  relate this curvature locus with the curvature locus  of a smooth 3-manifold  in $\mathbb{R}^6$, given in \cite{carmen}, through a local lifting. In what follows we review definitions and results from \cite{carmen}.

\subsection{Curvature locus of a smooth 3-manifold in $\mathbb{R}^6$}\label{subsec:locus carmem}
Let $\overline{M}$ be a smooth 3-manifold  in $\mathbb{R}^6$  parametrised by $\overline{f}:U\subset \mathbb{R}^3\rightarrow\mathbb{R}^6$ and a point $\overline{p}\in \overline{M}$.
For each $\overline{p}=\overline{f}(x,y,z)$, the \emph{tangent space} $T_{\overline{p}}\overline{M}$ is the $3$-dimensional vector space generated by $B^{\overline{f}}=\{\frac{\partial\overline{f}}{\partial x}(\overline{p}),\frac{\partial\overline{f}}{\partial y}(\overline{p}),\frac{\partial\overline{f}}{\partial z}(\overline{p})\}$. Also, the orthonormal frame that generates the \emph{normal space} $N_{\overline{p}}\overline{M}\equiv\mathbb{R}^{3}$ is $\{\textbf{e}_{1},\textbf{e}_{2},\textbf{e}_{3}\}$ such that the orientation of the frame
$$\left\{\frac{\partial\overline{f}}{\partial x}(\overline{p}),\frac{\partial\overline{f}}{\partial y}(\overline{p}),\frac{\partial\overline{f}}{\partial z}(\overline{p}),\textbf{e}_{1},\textbf{e}_{2},\textbf{e}_{3}\right\}$$
is the same as the one of $\mathbb{R}^{6}$.

Given a normal field $\nu$ in $\overline{M}$, consider the linear map
$$d\nu:T_{\overline{p}}\overline{M}\rightarrow T_{\overline{p}}\mathbb{R}^{6}\equiv T_{\overline{p}}\overline{M}\oplus N_{\overline{p}}\overline{M}$$
and the canonical projection $\pi^{\perp}:T_{\overline{p}}\overline{M}\oplus N_{\overline{p}}\overline{M}\rightarrow N_{\overline{p}}\overline{M}$. The map
$$S_{\overline{p}}^{\nu}=-\pi^{\perp}\circ d\nu:T_{\overline{p}}\overline{M}\rightarrow T_{\overline{p}}\overline{M}$$
is called the $\nu$-\emph{shape operator} at $\bar{p}$.
The \emph{second fundamental form of $\overline{M}$ along the normal field $\nu$} is the bilinear map
$$II_{\overline{p}}^{\nu}:T_{\overline{p}}\overline{M}\times T_{\overline{p}}\overline{M}\rightarrow\mathbb{R},\ II_{\overline{p}}^{\nu}(\textbf{v},\textbf{w})=\langle\nu,d^2\bar{f}(\textbf{v},\textbf{w})\rangle,$$
for all $\textbf{v},\textbf{w}\in T_{\overline{p}}\overline{M}$, where $d^2\bar{f}$ denotes the second derivative of $\overline{f}$.

Let $\overline{p}\in\overline{M}\subset\mathbb{R}^{6}$ and $v\in\mathbb{S}^{2}\subset T_{\overline{p}}\overline{M}$. We denote by $\gamma_v$ the normal section of $\overline{M}$ in the direction $v$, that is, $\gamma_v=\overline{M}\cap\mathbb{H}_v$, with $\mathbb{H}_v=\{\lambda v\}\oplus N_{\overline{p}}\overline{M}$ being a $4$-space through $\overline{p}$ in $\mathbb{R}^{6}$. The normal curvature vector $\eta(v)$ of $\gamma_v$ at $\overline{p}$, given by the projection of $\gamma''_v$ in $N_{\overline{p}}\overline{M}$. Varying $v$ in all unit sphere $\mathbb{S}^2\subset T_{\overline{p}}\overline{M}$, we obtain a surface contained in the normal space $N_{\overline{p}}\overline{M}$. Such surface is called the \emph{curvature locus of $\overline{M}$ at $\overline{p}$}.

The curvature locus can also be seen as the image of the unit sphere $\mathbb{S}^{2}\subset T_{\overline{p}}\overline{M}$ via
the second fundamental form of $\overline{M}$ at $\overline{p}$.

In \cite{carmen}, the authors prove that taking spherical coordinates in the unit tangent sphere $\mathbb{S}^{2}\subset T_{\overline{p}}\overline{M}$, the curvature locus is the image of the map

$$\begin{array}{cccc}
  \overline{\eta}: & \mathbb{S}^2\subset T_{p}\overline{M} & \rightarrow & N_{p}\overline{M}  \\
   & (\theta,\phi) & \mapsto& \overline{\eta}(\theta,\phi)
\end{array},$$

where

$$
\begin{array}{ll}
 \overline{\eta}(\theta,\phi)= & H+(1+3\cos(2\phi))B_1+\cos(2\theta)(\sin(\phi))^2B_2+\sin(2\theta)(\sin(\phi))^2B_3\\
 & +\cos(\theta)\sin(2\phi)B_4+\sin(\theta)\sin(2\phi)B_5,
\end{array}
$$

\noindent with $H=\frac{1}{3}(\overline{f}_{xx}+\overline{f}_{yy}+\overline{f}_{zz})$, $B_1=\frac{1}{12}(-\overline{f}_{xx}-\overline{f}_{yy}+2\overline{f}_{zz})$, $B_2=\frac{1}{2}(\overline{f}_{xx}-\overline{f}_{yy})$, $B_3=\overline{f}_{xy}$, $B_4=\overline{f}_{xz}$ and $B_5=\overline{f}_{yz}$, where $\overline{f}_{xx}=\frac{\partial^2\overline{f}}{\partial x^2}$, etc.

The value of the normal field $H$ at  $\overline{p}\in\overline{M}$, is the mean curvature vector of $\overline{M}$ at $\overline{p}$.

The first normal of $\overline{M}$ at $\overline{p}$ is given by $$N_{\overline{p}}^1\overline{M}=\langle \overline{f}_{xx},\overline{f}_{xy},\overline{f}_{xz},\overline{f}_{yy},\overline{f}_{yz},\overline{f}_{zz}\rangle_{\overline{p}}.$$
This space coincides with the subspace $\langle H,B_1,B_2,B_3,B_4,B_5\rangle$. Denote by $Af\overline{f}_{\overline{p}}$ the affine hull of the curvature locus in $N_{\overline{p}}^1\overline{M}$ and by $\overline{E}_{\overline{p}}=\langle B_1,B_2,B_3,B_4,B_5\rangle$ the linear subspace of $N_{\overline{p}}^1\overline{M}$ which is parallel to $Af\overline{f}_{\overline{p}}$.

An alternative expression for the locus is  given by $$\overline{\eta}(u,v,w)=H+(4-6u^2-6v^2)B_1+(u^2-v^2)B_2+2uvB_3+2uwB_4+2vwB_5,$$ where $u^2+v^2+w^2=1$.

In \cite{carmen} the authors interpret the curvature locus as the image of the classical Veronese surface of order 2 through a convenient linear map. We recall the expression of the  Veronese surface of order 2, which is given by the image of the 2-sphere in $\mathbb{R}^3$ through the map $\xi:\mathbb{R}^3\rightarrow \mathbb{R}^6$, defined by $\xi(u,v,w)=(u^2,v^2,w^2,\sqrt{2}uv,\sqrt{2}uw,\sqrt{2}vw)$.
\begin{defn}
Let $\overline{M}$ be a 3-manifold immersed in $\mathbb{R}^6$. We say that a point $\overline{p}\in \overline{M}$ \emph{is of type $\overline{M}_i$}, $i=0,1,2,3$ if $\dim N_{\overline{p}}^1\overline{M}=i$.
\end{defn}
By the previous definition, a point $\overline{p}$ is of type $\overline{M}_i$, $i=0,1,2,3$ if and only if the second fundamental form has rank $i$ at $\overline{p}$.

\begin{theo}{\rm\cite{carmen}}
Given a compact 3-manifold $\overline{M}$ in $\mathbb{R}^6$, there is a residual subset of immersions of $\overline{M}$ into $\mathbb{R}^6$ such that every point of $\overline{M}$ is of type $\overline{M}_3$.
\end{theo}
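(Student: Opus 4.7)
The plan is to interpret the statement as a genericity result proved by jet transversality, computing the codimension of the failure locus in the 2-jet bundle and showing it exceeds $\dim\overline{M}=3$, so that a residual set of immersions avoid it entirely.

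First I would set up the jet-theoretic framework. Let $\mathrm{Imm}(\overline{M},\mathbb{R}^6)$ denote the space of smooth immersions equipped with the Whitney $C^\infty$-topology, and let $J^2(\overline{M},\mathbb{R}^6)$ be the corresponding 2-jet bundle. A point $\overline{p}\in \overline{M}$ fails to be of type $\overline{M}_3$ precisely when the second fundamental form at $\overline{p}$ has rank at most $2$, i.e.\ the six projected second derivatives $\overline{f}_{xx}^\perp,\overline{f}_{xy}^\perp,\overline{f}_{xz}^\perp,\overline{f}_{yy}^\perp,\overline{f}_{yz}^\perp,\overline{f}_{zz}^\perp$ span a proper subspace of $N_{\overline{p}}^1\overline{M}$. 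Define the \emph{bad locus}
\[
W\;=\;\bigl\{\,\sigma\in J^2(\overline{M},\mathbb{R}^6)\;:\;\sigma\text{ is the }2\text{-jet of an immersion and }\mathrm{rank}(\mathcal{II}_\sigma)\leq 2\,\bigr\}.
\]
The goal becomes to prove that $W$ has codimension $\geq 4$ in the open subbundle of 2-jets of immersions, and then apply Thom's transversality theorem.

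Next I would compute the codimension. Over a fixed $1$-jet of an immersion the tangent and normal decomposition $T_{\overline p}\mathbb{R}^6=T_{\overline p}\overline M\oplus N_{\overline p}\overline M$ is determined, and the fiber of $J^2\to J^1$ is naturally identified with the space $\mathrm{Sym}^2(T_{\overline p}\overline M)^{\ast}\otimes N_{\overline p}\overline M$, i.e.\ with $3\times 6$ matrices after choosing orthonormal frames. The locus of matrices of rank at most $2$ inside the space of $3\times 6$ matrices is a well-known determinantal variety of codimension $(3-2)(6-2)=4$. Since this computation is fiberwise over the base $\overline{M}\times\mathbb{R}^6$, the full bad locus $W$ is a semialgebraic (in fact stratified closed) subset of the immersion jet bundle with $\mathrm{codim}\,W=4$. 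The deeper strata where $\mathrm{rank}(\mathcal{II})\leq 1$ or $\leq 0$ have codimensions $2\cdot 5=10$ and $3\cdot 6=18$ respectively, so they are negligible and the generic singular stratum $\mathrm{rank}=2$ is what determines the codimension.

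Finally I would invoke Thom's jet transversality theorem: the set of immersions $F\in\mathrm{Imm}(\overline M,\mathbb{R}^6)$ whose $2$-jet extension $j^2F:\overline M\to J^2(\overline M,\mathbb{R}^6)$ is transverse to each stratum of $W$ is residual (and, since $\overline M$ is compact, open and dense). For such $F$ the dimension count
\[
\dim\overline M - \mathrm{codim}\,W \;=\; 3-4 \;=\; -1 \;<\; 0
\]
forces $j^2F(\overline M)\cap W=\emptyset$, which is precisely the statement that every $\overline p\in\overline M$ is of type $\overline{M}_3$. The main technical obstacle is verifying that $W$ is indeed a Whitney-stratified closed subvariety of the immersion $2$-jet bundle and that the determinantal codimension survives the passage from a single fiber to the whole bundle, which is handled by observing that the identification with $3\times 6$ matrices is smooth in the base point once orthonormal frames are chosen locally; beyond this the argument is routine transversality.
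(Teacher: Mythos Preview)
Your argument is correct and is the standard transversality proof one expects for such a statement: the second fundamental form at a point is a linear map $\mathrm{Sym}^2(T_{\overline p}\overline M)\to N_{\overline p}\overline M$, hence a $3\times 6$ matrix, and the determinantal codimension $(3-2)(6-2)=4>3=\dim\overline M$ forces generic avoidance via Thom transversality. There is nothing to compare against in the present paper, however: the theorem is not proved here but merely quoted from \cite{carmen} as background for Section~\ref{sec:geometria locus}, so the paper contains no proof of its own for this statement.
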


As we have seen in the previous theorem, a generic immersion of a 3-manifold $\overline{M}$ into $\mathbb{R}^6$ is exclusively made of $\overline{M}_3$ points.
The next theorem describes all the possible topological types for the curvature locus at the points of these manifolds.
To prove it, the authors use information contained in \cite{adam}, where a description
of quadratically parametrised surfaces of order greater that 2 in $\mathbb{R}^3$ is given.

\begin{theo}{\rm\cite{carmen}}\label{teo:principalcarmen}
The curvature locus at a point of type $\overline{M}_3$ of a 3-manifold $\overline{M}$ immersed in $\mathbb{R}^6$ may have one of the following shapes:
\begin{itemize}
\item[\rm{(1)}] A Roman Steiner surface, a Crosscap surface, a Steiner surface of type 5, a Cross-Cup surface, an ellipsoid, or a (compact) cone, provided the mean curvature vector $H(\overline{p})$ lies in $\overline{E}_{\overline{p}}$.
    \item[\rm{(2)}] In the case that the mean curvature vector $H(\overline{p})$ does not lie in $\overline{E}_{\overline{p}}$, the curvature locus is a convex planar region linearly equivalent to one of a following: an elliptic region, a triangle, a (compact) planar cone or a planar projection (of type 1, 2 or 3) of the Veronese surface.
\end{itemize}

\begin{center}
\begin{figure}[!htb]
\begin{minipage}[b]{0.5\linewidth} \hspace{1.9cm}
\includegraphics[scale=0.4]{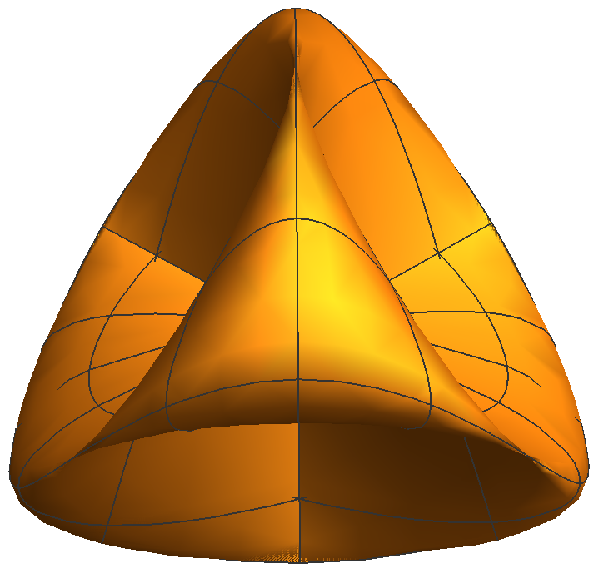}
\caption{Roman steiner.}
\label{fig:steiner1}
\end{minipage} \hfill
\begin{minipage}[b]{0.5\linewidth} \hspace{1.7cm}
\includegraphics[scale=0.4]{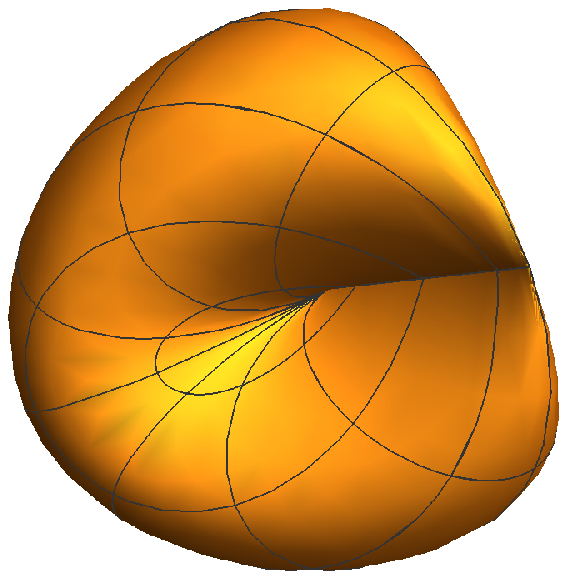}
\caption{Cross-cap.}
\label{fig:cross-cap}
\end{minipage} \hfill \\
\begin{minipage}[b]{0.5\linewidth} \hspace{1.7cm}
\includegraphics[scale=0.4]{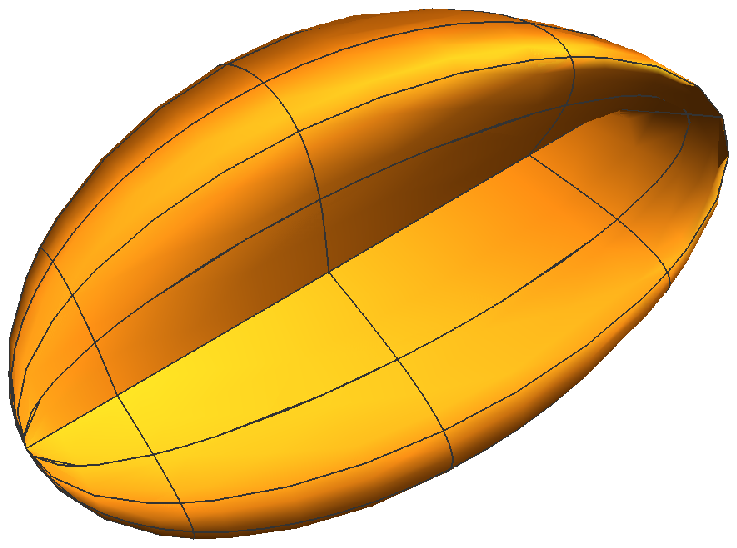}
\caption{Steiner Type $5$.}
\label{fig:tipo5}
\end{minipage} \hfill
\begin{minipage}[b]{0.5\linewidth} \hspace{1.9cm}
\includegraphics[scale=0.4]{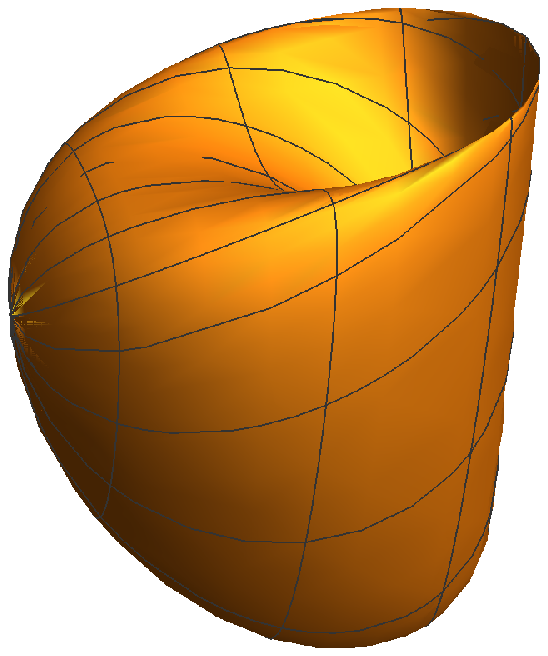}
\caption{Cross-cup.}
\label{fig:cross-cup}
\end{minipage} \hfill
\end{figure}
\end{center}

\begin{figure}[!htb]
\begin{minipage}[b]{0.3\linewidth} \hspace{0.7cm}
\includegraphics[scale=0.35]{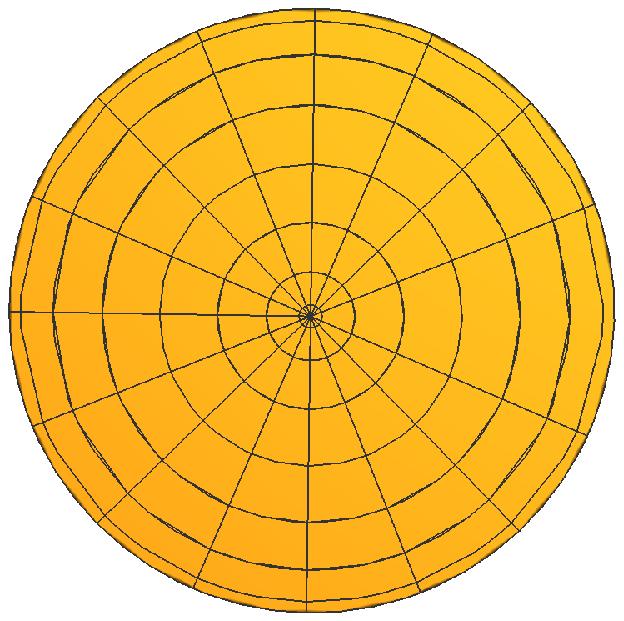}
\caption{Elliptic region}
\label{fig:eliptica}
\end{minipage}
\begin{minipage}[b]{0.3\linewidth}\hspace{0.7cm}
\includegraphics[scale=0.35]{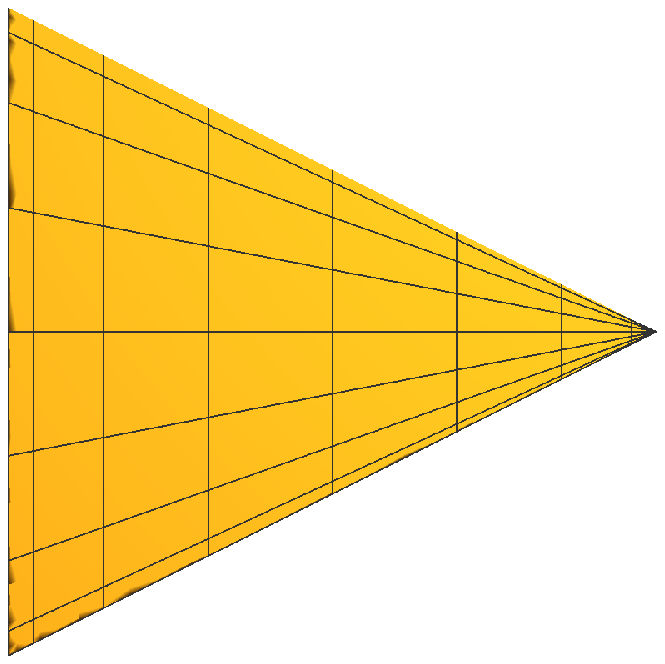}
\caption{Triangle}
\label{fig:triangular}
\end{minipage}
\begin{minipage}[b]{0.3\linewidth} \hspace{0.7cm}
\includegraphics[scale=0.39]{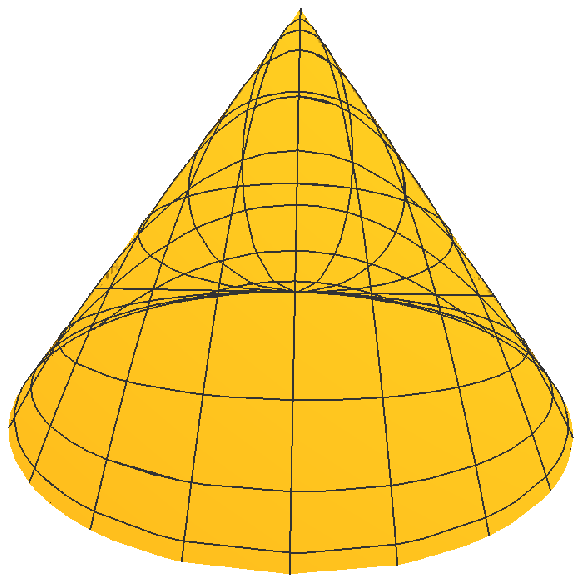}
\caption{Planar cone}
\label{fig:cone}
\end{minipage} \\
\begin{minipage}[b]{0.3\linewidth} \hspace{0.9cm}
\includegraphics[scale=0.35]{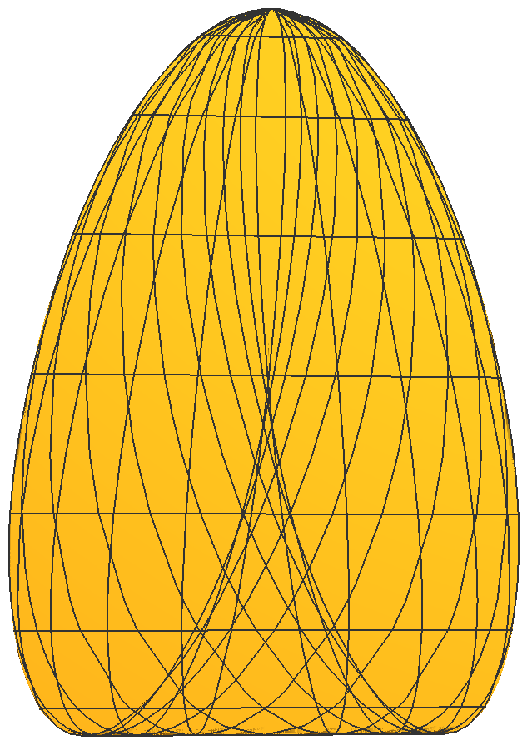}
\caption{Type $1$}
\label{fig:tipo1}
\end{minipage}
\begin{minipage}[b]{0.3\linewidth} \hspace{0.7cm} \vspace{-0.4cm}
\includegraphics[scale=0.35]{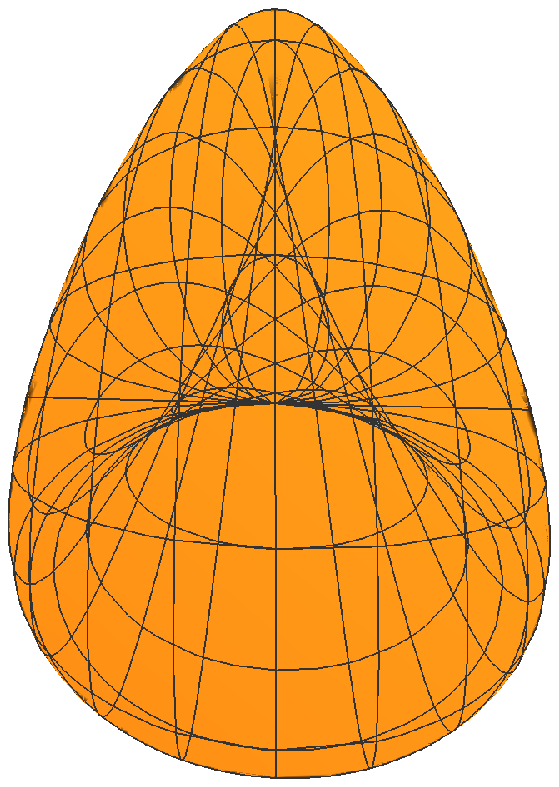}
\caption{Type $2$}
\label{fig:tipo2}
\end{minipage}
\begin{minipage}[b]{0.3\linewidth} \hspace{0.7cm}
\includegraphics[scale=0.35]{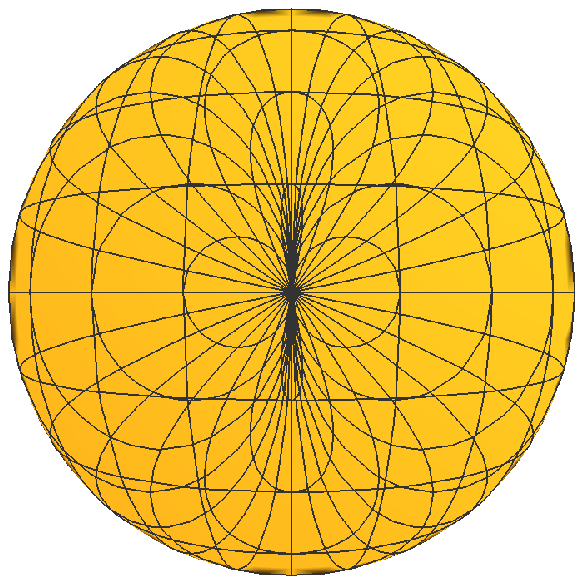}
\caption{Type $3$}
\label{fig:tipo3}
\end{minipage}
\end{figure}

%
\end{theo}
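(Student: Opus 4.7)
The plan is to exploit the fact that the curvature locus is a linearly transformed Veronese surface. From the explicit formula
\[
\overline{\eta}(u,v,w) = H + (4 - 6u^2 - 6v^2)B_1 + (u^2 - v^2)B_2 + 2uv\,B_3 + 2uw\,B_4 + 2vw\,B_5,
\]
with $u^2+v^2+w^2 = 1$, I would factor $\overline{\eta} = H + L\circ\xi$, where $\xi$ is the order-$2$ Veronese embedding and $L\colon\mathbb{R}^6\to N_{\overline{p}}\overline{M}$ is a linear map whose image is contained in $\overline{E}_{\overline{p}}$. Thus $\Delta_{\overline{p}}$ is the translate by $H$ of a linear image of the Veronese surface, so its topological type is determined by the rank and kernel structure of $L$ together with the position of $H$ relative to $\overline{E}_{\overline{p}}$.

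First I would exploit the assumption that $\overline{p}$ is of type $\overline{M}_3$, i.e.\ $\dim N^1_{\overline{p}}\overline{M}=3$. Since $N^1_{\overline{p}}\overline{M}=\langle H,B_1,\dots,B_5\rangle$ and $\overline{E}_{\overline{p}}=\langle B_1,\dots,B_5\rangle$, we get a clean dichotomy: either $H\in\overline{E}_{\overline{p}}$, in which case $\dim\overline{E}_{\overline{p}}=3$ and $\mathrm{Aff}_{\overline{p}}=H+\overline{E}_{\overline{p}}=\overline{E}_{\overline{p}}$ is the full $3$-dimensional normal space, or $H\notin\overline{E}_{\overline{p}}$, in which case $\dim\overline{E}_{\overline{p}}=2$ and $\mathrm{Aff}_{\overline{p}}$ is a $2$-plane not passing through the origin. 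This recovers the split between cases (1) and (2).

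For case (1), $\Delta_{\overline{p}}$ is realised as a quadratically parametrised surface in a $3$-space: the image of $\mathbb{S}^2$ under a map whose components are homogeneous quadratic forms in $(u,v,w)$. Here I would invoke as a black box the classification from \cite{adam} of quadratically parametrised surfaces in $\mathbb{R}^3$ of degree $>2$, which up to linear equivalence lists precisely the Roman Steiner surface, crosscap, Steiner surface of type $5$, cross-cup, ellipsoid, and compact cone. For case (2), the image lies in a $2$-plane, so one is looking at a linear projection of the Veronese surface to $\mathbb{R}^2$. Since $\xi(\mathbb{S}^2)$ lies on the boundary of its convex hull (the Veronese body is convex after embedding into the $5$-sphere) and linear projections preserve convexity, $\Delta_{\overline{p}}$ is a convex planar region. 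I would then enumerate the possible shapes by a rank-and-degeneracy analysis of the projection, obtaining the elliptic region generically, the triangle and planar cone in the most degenerate cases, and the three planar projections of type $1$, $2$, $3$ of the Veronese in the intermediate ones.

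The main obstacle is case (1): showing that exactly those six topological types arise requires a detailed algebraic classification of the singular locus of a quadratically parametrised surface (its Whitney umbrellas, real double curves, and the antipodal identification $\xi(u,v,w)=\xi(-u,-v,-w)$), which is the technical heart of \cite{adam}. I would cite it rather than reproduce it. A secondary difficulty in case (2) is checking that the listed planar types are genuinely distinct under linear equivalence in the normal plane; this can be distinguished by counting vertices and inflections of the boundary curve, which are discrete invariants preserved under the ambient linear action.
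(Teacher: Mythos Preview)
The paper does not actually prove this theorem: it is quoted from \cite{carmen}, and the only indication given about the argument is the sentence preceding the statement, namely that the authors of \cite{carmen} use the classification of quadratically parametrised surfaces in $\mathbb{R}^3$ from \cite{adam}, together with the remark after the statement that the curvature loci are linear projections of the classical Veronese surface of order~2 into a $3$-space. Your sketch is entirely consistent with that brief description: you factor $\overline{\eta}$ through the Veronese map, use the $\overline{M}_3$ hypothesis to obtain the dichotomy $H\in\overline{E}_{\overline{p}}$ versus $H\notin\overline{E}_{\overline{p}}$, and in case~(1) invoke \cite{adam} as a black box, which is exactly the strategy the paper attributes to \cite{carmen}. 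So there is nothing to compare against here beyond noting that your outline matches the indicated approach.
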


The curvature loci in Theorem \ref{teo:principalcarmen} are projections of the classical Veronese surface of order 2 into a linear 3-space. These surfaces have an implicit form in terms of polynomials of degree at most 4.


Also by \cite{carmen} we have that the curvature locus may be a planar region just at isolated points of generic immersions.

\subsection{Real nets of quadrics}\label{subsec:nets}

The affine geometry of the second fundamental form is the study of invariants
of the action of the affine group $\mathcal{G}=GL(3)\times GL(3)$ in the space of quadratic polynomial mappings $g:\mathbb{R}^3\rightarrow\mathbb{R}^3$. Each
such mapping generates a system of quadrics denominated a ``net of quadrics". One can find in the literature many texts about nets of quadrics (see for instance
\cite{wall/duplesis,wall,wall1,wall2}).
In \cite{wall/duplesis} is presented a classification of the real nets of quadrics with respect to $\mathcal{K}$-equivalence (see \cite{mather} or \cite{wall3} for the definition of the contact group and its action).

Let $H^2(3)$ be the space of homogeneous polynomials of degree 2 in 3 variables. A net of quadrics is a system in $H^2(3)$ generated by 3 polynomials $q_1,q_2,q_3$, where $q_i\in H^2(3)$ for $i=1,2,3$. Associated to each net there is a map germ $q:(\mathbb{R}^3,0)\rightarrow (\mathbb{R}^3,0)$, $q=(q_1,q_2,q_3)$. Denote by $\langle q \rangle=\langle q_1,q_2,q_3\rangle$ the net given by $q=(q_1,q_2,q_3)$, that is, $\langle q\rangle=\{\lambda q_1+\mu q_2+\nu q_3\,|\,\lambda,\mu,\nu\in\mathbb{R}\}$. Let $\Lambda$ be the set of all nets $\langle q\rangle$. It follows from \cite{wall} that there is a Zariski open set of $\Lambda$, denoted by $\Lambda_0$, such that any net $\langle q\rangle$ in $\Lambda_0$ can be taken in the form:
\begin{equation}\label{eq:net1}
\lambda(2xz+y^2)+\mu( 2yz)+\nu(-x^2-2gy^2+cz^2+2gxz),\,\,\, c(c + 9g^2) \neq 0\,\,\, (\mbox{see}\,\, \cite{wall4})
\end{equation}
or in the Hessian form
\begin{equation}\label{eq:net2}
\lambda(x^2+2cyz)+\mu(y^2+2cxz)+\nu(z^2+2cxy), \,\,\,\,\,c(c^3-1)(8c^3+1)\neq 0.
\end{equation}

A net in this set is called a general real net of quadrics. As the normal forms of the generic nets are given by homogeneous polynomial maps of degree 2, and in this case, the corresponding map germ $q=(q_1,q_2,q_3)$ is 2-determined with respect to $\mathcal{K}$-equivalence, it follows that the $\mathcal{K}$-classification coincides with
the classification by the action of the group $\mathcal{G}=GL(3)\times GL(3)$.

The complete classification of quadratic mappings $q\in H^2(3)$ with respect to $\mathcal{G}=GL(3)\times GL(3)$-equivalence can be found
in \cite{wall/duplesis}. The family (\ref{eq:net1}) is labelled $A$, $B$ and $C$ according to the values of the parameters $c$ and $g$.
Table \ref{caseAB} presents the types $A$, $B$ and its subcases. Type $C$ is given by $c=g=0$, and the discriminant for cases $A,B$ and $C$ is
$\Delta=-\lambda^2\nu+(\lambda-2g\nu)(\lambda^2+2g\lambda\nu+(c+g^2)\nu^2)$.

The orbits in the complex case are labelled as follows:
\newline

\begin{tabular}{lccccccccccccccc}
  Name & $A$ & $B$ & $B^*$ & $C$ & $D$ & $D^*$ & $E$ & $E^*$ & $F$ & $F^*$ & $G$ & $G^*$ & $H$ & $I$ & $I^*$ \\
  Codimension & 0 & 1 & 1 & 2 & 2 & 2 & 3 & 3 & 3 & 3 & 4 & 4 & 5 & 7 & 7 \\
\end{tabular}
\newline

The type $A$ depends on a modulus and in the real case it splits four subcases. Type $B$, $B^*$, $C$, $C^*$, $D$, $D^*$, $E$, $E^*$, $F$ and $F^*$ also have subcases.
\newline

\begin{table}[h]
\caption{Orbits $A$ and $B$}
\centering
{\begin{tabular}{cccccc}
  \hline
  $c<-9g^2$ & $c=-9g^2$ & $-9g^2<g<0$ & $c=0$ & $c>0$ &  \\
   & $B_c$ & $A_c$ & $B_a^*$ &  & $g>0$  \\
  $A_b$ &  &  &  & $A_d$ &  \\
   & $B_a$ & $A_a$ & $B_c^*$ &  & $g<0$ \\
  \hline
\end{tabular}
}
\label{caseAB}
\end{table}

The remaining cases are shown in Table \ref{other types}, along with their respective discriminants.

\begin{table}[h]
\caption{Other orbits}
\centering
{\begin{tabular}{ccc}
\hline
Name & Normal form & discriminant\\
\hline
$D_a$ &  $\langle x^2,y^2,z^2+2xy\rangle$ & $\nu(\lambda\mu-\nu^2)$ \\
$D_b,\ D_c$ & $\langle x^2-y^2,2xy,x^2\pm z^2\rangle$ & $\nu(\lambda^2+\lambda\nu+\mu^2)$ \\
$D_a^*$ & $\langle 2xz,2yz,z^2+2xy\rangle$ & $\nu(2\lambda\mu-\nu^2)$\\
$D_b^*,\ D_c^*$ & $\langle 2xz,2yz,x^2+y^2\mp z^2 \rangle$ & $\nu(\lambda^2+\mu^2\pm\nu^2)$\\
$E_a,\ E_b$ & $\langle x^2\pm y^2,2xy,z^2 \rangle$ & $\nu(\lambda^2\mp \mu^2)$ \\
$E_a^*,\ E_b^*$ & $\langle x^2\mp y^2,2xz,2yz \rangle$ & $\lambda(\mu^2\pm\nu^2)$  \\
$F_a,\ F_b$ & $\langle x^2\pm y^2,2xy,2yz \rangle$ & $\lambda\nu^2$  \\
$F_a^*,\ F_b^*$ & $\langle x^2\mp y^2,2xz,z^2 \rangle$ &  $\lambda(\lambda\nu-\mu^2)$ \\
$G$ &  $\langle x^2,y^2,2yz \rangle$ & $\lambda\nu^2$ \\
$G^*$ & $\langle 2xy,2xz,z^2 \rangle$ & $\lambda^2\nu$ \\
$H$ & $\langle x^2,2xy,y^2+2xz \rangle$ & $\nu^3$ \\
$I$ & $\langle x^2,2xy,y^2 \rangle$ & $0$ \\
$I^*$ & $\langle 2xz,2yz,z^2 \rangle$  & $0$ \\
\hline
\end{tabular}
}
\label{other types}
\end{table}

Given a net $q=(q_1,q_2,q_3)$, we can naturally associate the $2$-jet of parametrisation of a smooth $3$-manifold in the Monge form:
$$(x,y,z)\mapsto(x,y,z,q_1(x,y,z),q_2(x,y,z),q_3(x,y,z)),$$
whose second fundamental form at the origin is given by the quadratic map $q$.
A natural question that may arise is the following: given two $\mathcal{G}=GL(3)\times GL(3)$-equivalent nets, $\langle q\rangle$ and $\langle \bar{q}\rangle$, do the respective curvature loci have the same topological type? The answer to that questions is no. The following example illustrates this assertion.

\begin{ex}\label{ex.net}
The $3$-manifold given by $$f(x,y,z)=(x,y,z,x^2+yz,z^2-yz,xz+yz)$$ is such that its curvature locus at the origin is a Roman Steiner surface. Its associated net $q_f=(x^2+yz,z^2-yz,xz+yz)$ is $\mathcal{G}$-equivalent to the germ $(x^2+z^2,z(z-y),z(x+y))$ and using the changes of coordinates in source $\bar{x}=x+y$ and $\bar{y}=z-y$, one can show that $q_f$ is $\mathcal{G}$-equivalent to $(y^2+2z^2,xz,yz)$, which is equivalent to the germ $F_a$ on Table \ref{other types}, whose curvature locus at the origin is a Steiner surface of type 5.
\end{ex}

Example \ref{ex.net} shows that the complete classification of the loci of curvature's topological type
of regular $3$-manifolds in $\mathbb{R}^6$ will require a refinement of the $\mathcal{G}$-classification.
We will discuss this problem, and also the case for corank $1$ $3$-manifolds in $\mathbb{R}^5$, in a forthcoming paper.


\subsection{Curvature locus of a singular 3-manifold in $\mathbb{R}^5$}
For a singular 3-manifold $M$ in $\mathbb{R}^5$ the study of the second fundamental form is analogous, therefore  we can consider the definitions of the first normal space $N_{p}^1M$, affine hull $Aff_p$ and $E_p$ given as previously. In this way, we say that a point $p\in M$ is of type $M_i$, $i=0,1,2,3$ if and only if the second fundamental form has rank $i$ at $p$.

%

We denote by $\mathcal{{S}}_{\overline{M}}$ the residual set of immersions of $\overline{M}$ in $\mathbb{R}^6$ such that every point $p\in \overline{M}$ is of type $\overline{M}_3$ and the curvature locus is a planar region just at isolated points.

From now on we restrict our investigation to the set $\mathcal{S}$ of corank 1 3-manifolds $M$ in $\mathbb{R}^5$ such that at each singular point $p\in M$, there exists a local lifting of $(M,p)$ to $(\overline{M},\overline{p})$ such that $\overline{M}\in \mathcal{{S}}_{\overline{M}} $.

By Remark \ref{obs:cilindro} we have that the subset of unit tangent vectors, $C_q\in T_pM$, is a unity cylinder. In this way, we blow up the sphere at the north and south poles to get a cylinder making
$$x=\cos(\theta),\ y=\sin(\theta)\ \mbox{and}\ z=\dfrac{\cos(\phi)}{\sin(\phi)}, \mbox{such that}\ 0\leq\theta\leq 2\pi,\ 0<\phi<\pi.$$
So we obtain the parametrisation of the curvature locus given in cylindrical coordinates and relate to the curvature locus of a smooth 3-manifold in $\mathbb{R}^6$ given in spherical coordinates, through a local lifting.


The following examples illustrate different possibilities for the curvature locus of a singular 3-manifold in $\mathbb{R}^5$. In each example
below consider the cylindrical coordinates
$$a=\cos(\theta),\ b=\sin(\theta)\ \mbox{and}\ c=\dfrac{\cos(\phi)}{\sin(\phi)},\ 0\leq\theta\leq 2\pi,\ 0<\phi<\pi\ \mbox{and}\ p=(0,0,0,0,0).$$

\begin{ex}\label{ex:1}
 Let the 3-dimensional crosscap in  $\mathbb{R}^5$ given by  $f(x,y,z)=(x,y,xz,yz,z^2)$. By Example \ref{ex:paraboloide}, we have that the curvature locus at  $p$ is a paraboloid given by $\eta(a,b,c)=(2ac, 2bc, 2c^2)$ such that $ a^2+b^2=1$. Taking  cylindrical coordinates,  the paraboloid can be parametrised by
$$\eta(\theta,\phi)=\left(\dfrac{\cos(\theta)\sin(2\phi)}{(\sin(\phi))^2},\dfrac{\sin(\theta)\sin(2\phi)}{(\sin(\phi))^2},\dfrac{1+\cos(2\phi)}{(\sin(\phi))^2}\right).$$

The lifting of $f$ is given by parametrisation  $ \overline{f}(x,y,z)=(x,y,z,xz,yz,z^2)$ of a smooth 3-manifold in  $\mathbb{R}^6$  such that the curvature locus is given by
  $$ \overline{\eta}(\theta,\phi)=\left(\cos(\theta)\sin(2\phi),\sin(\theta)\sin(2\phi),1+\cos(2\phi)\right),$$ that is, an ellipsoid.
\end{ex}

%

\begin{ex}\label{ex:3}
Consider the manifold parametrised by $f(x,y,z)=(x,y,x^2+yz,y^2+xz,z^2+xy)$, so we have that the curvature locus at the origin $p$ is given by $\eta(a,b,c)=(2a^2+c^2,2ac,2bc)$, where $ a^2+b^2=1$. Taking cylindrical coordinates, the curvature locus can be parametrised by $\eta(\theta,\phi)=$
$$\left(\dfrac{2cos(\theta)^2\sin(\phi)^2+\sin(\theta)\sin(2\phi)}{\sin(\phi)^2},\dfrac{2\sin(\theta)^2\sin(\phi)^2+\cos(\theta)\sin(2\phi)}{\sin(\phi)^2},
2\cos(\phi)^2+\sin(2\theta)\right).$$ Figure \ref{fig:cuspidalsw3} illustrates this surface.
We can lift $f$ to obtain the parametrisation $\overline{f}(x,y,z)=(x,y,z,x^2+yz,y^2+xz,z^2+xy)$ of a regular 3-manifold in $\mathbb{R}^6$ such that the
curvature locus at the origin is given by $\overline{\eta}(\theta,\phi)=$
$$\left(2cos(\theta)^2\sin(\phi)^2+\sin(\theta)\sin(2\phi),2\sin(\theta)^2\sin(\phi)^2+\cos(\theta)\sin(2\phi),\sin(\phi)^2(2\cos(\phi)^2+\sin(2\theta))\right),$$ that is, a Roman Steiner surface.
\begin{figure}[H]
\center
\includegraphics[scale=0.5]{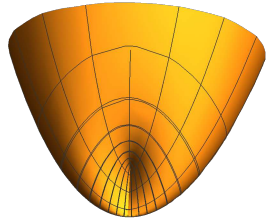}
\caption{Curvature locus of Example \ref{ex:3}}
\label{fig:cuspidalsw3}
\end{figure}
\end{ex}

\begin{ex}\label{ex:4}
Let $f(x,y,z)=(x,y,x^2-\frac{1}{2}yz,y^2-\frac{1}{2}xz, z^2-\frac{1}{2}xy)$, so we have that the curvature locus at the origin $p$ is given by $\eta(a,b,c)=(2a^2-bc,2b^2-ac,2c^2-ab)$ such that $a^2+b^2=1$. Figure \ref{fig:cuspidalsw2} illustrates this curvature locus.
Taking cylindrical coordinates, we can write
$$
\begin{array}{c}
\eta(\theta,\phi)=\Bigg(\dfrac{2(1+\cos(2\theta)\sin(\phi)^2-\sin(\theta)\sin(2\phi))}{(2\sin(\phi))^2},\\
\dfrac{2(1-\cos(2\theta)(\sin(\phi))^2-\cos(\theta)\sin(2\phi))}{2\sin(\phi)^2},\dfrac{2(1+\cos(2\phi)-\sin(2\theta)\sin(\phi)^2)}
{2\sin(\phi)^2}\Bigg).
\end{array}
$$
We can lift $f$ to obtain the parametrisation $\overline{f}(x,y,z)=(x,y,z,x^2-\frac{1}{2}yz,y^2-\frac{1}{2}xz, z^2-\frac{1}{2}xy)$ of a smooth 3-manifold in  $\mathbb{R}^6$ such that the curvature locus at the origin is given by
$$\begin{array}{c}
\overline{\eta}(\theta,\phi)=\Bigg((1+\cos(2\theta)\sin(\phi)^2-\dfrac{\sin(\theta)\sin(2\phi)}{2},\\
(1-\cos(2\theta)\sin(\phi)^2-\dfrac{\cos(\theta)\sin(2\phi)}{2},1+\cos(2\phi)-\dfrac{\sin(2\theta)\sin(\phi)^2}{2}\Bigg),
\end{array}
$$
that is, a Crosscap surface.

%

\begin{figure}[!htb]\hspace{4.5cm}
\begin{minipage}[b]{0.3\linewidth}
\includegraphics[scale=0.4]{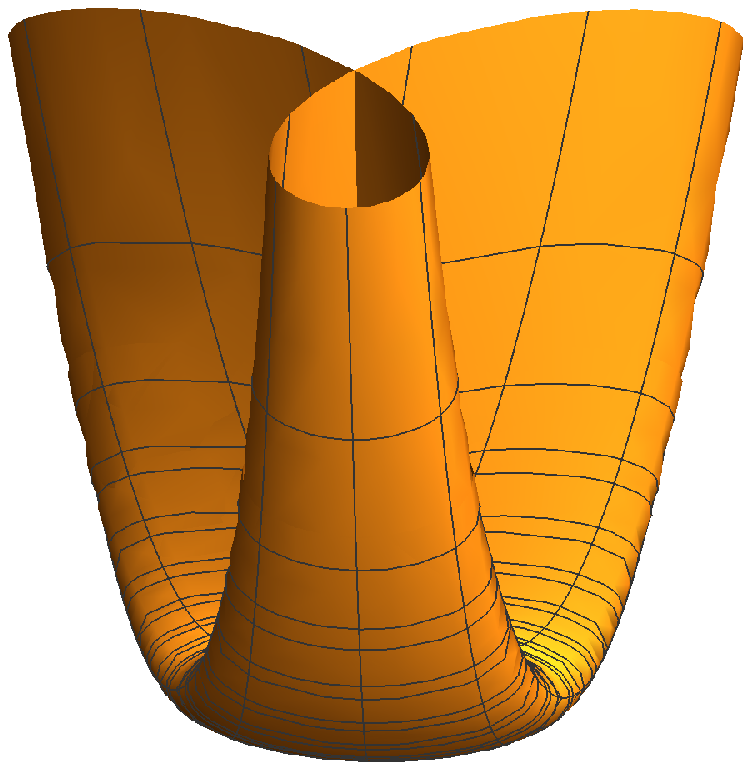}
\end{minipage} \hfill
\begin{minipage}[b]{0.3\linewidth}\hspace{-2cm}
\includegraphics[scale=0.3]{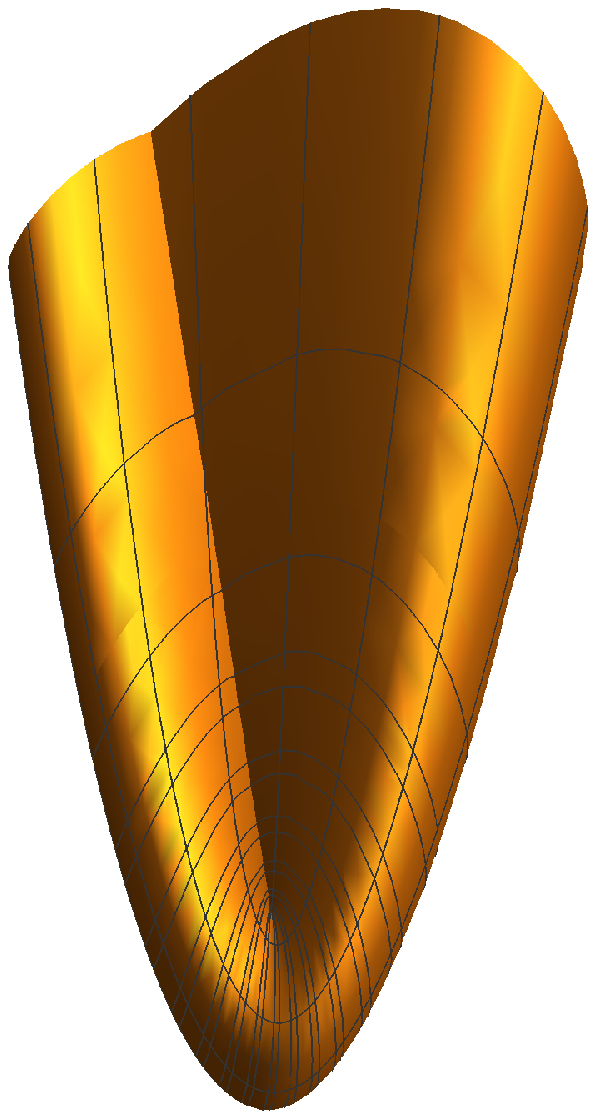}
\end{minipage}
\caption{Curvature locus of Example \ref{ex:4}}
\label{fig:cuspidalsw2}
\end{figure}

\end{ex}

\begin{ex}\label{ex:5}
Consider the manifold parametrised by $f(x,y,z)=(x,y,3z^2,xy+\frac{1}{2}xz,yz)$. The curvature locus at the origin $p$ is given by $\eta(a,b,c)=(6c^2,2ab+ac,2bc)$ such that $ a^2+b^2=1$. Taking cylindrical coordinates, we may write $$\eta(\theta,\phi)=\left(\dfrac{3+3\cos(2\phi)}{\sin(\phi)^2},\dfrac{1}{2}\left(\dfrac{2\sin(2\theta)\sin(\phi)^2+\cos(\theta)\sin(2\phi)}{\sin(\phi)^2}\right)
,\dfrac{\sin(\theta)\sin(2\phi)}{\sin(\phi)^2}\right).$$
Figure \ref{fig:cuspidalsw} illustrates the curvature locus.
Taking the lifting of $f$, we obtain the parametrisation $\overline{f}(x,y,z)=(x,y,z,3z^2,xy+\frac{1}{2}xz,yz)$ of a regular 3-manifold in $\mathbb{R}^6$ whose curvature locus at the origin is given by
$$\overline{\eta}(\theta,\phi)=\left(3+3\cos(2\phi),\dfrac{1}{2}(2\sin(2\theta)\sin(\phi)^2+\cos(\theta)\sin(2\phi)),\sin(\theta)\sin(2\phi)\right),$$ that is, a Steiner surface of type 5.

%

\begin{figure}[!htb]\hspace{4.5cm}
\begin{minipage}[b]{0.3\linewidth}
\includegraphics[scale=0.4]{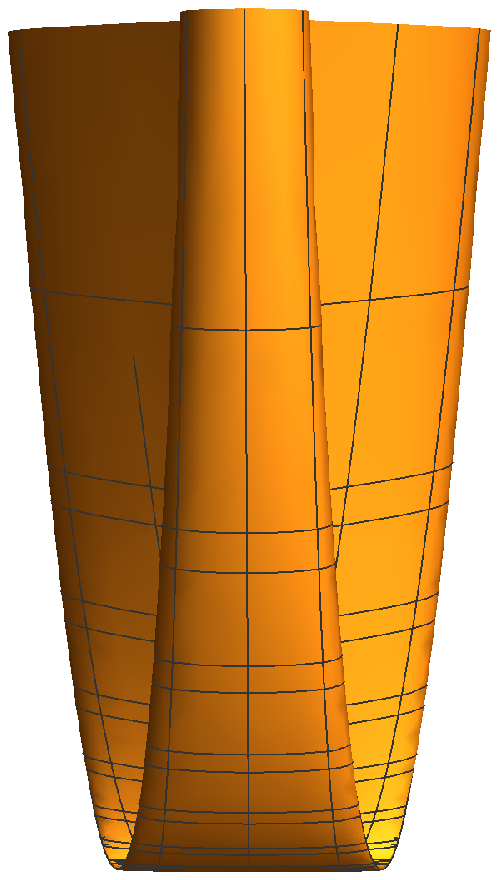}
\end{minipage} \hfill
\begin{minipage}[b]{0.3\linewidth}\hspace{-2cm}
\includegraphics[scale=0.4]{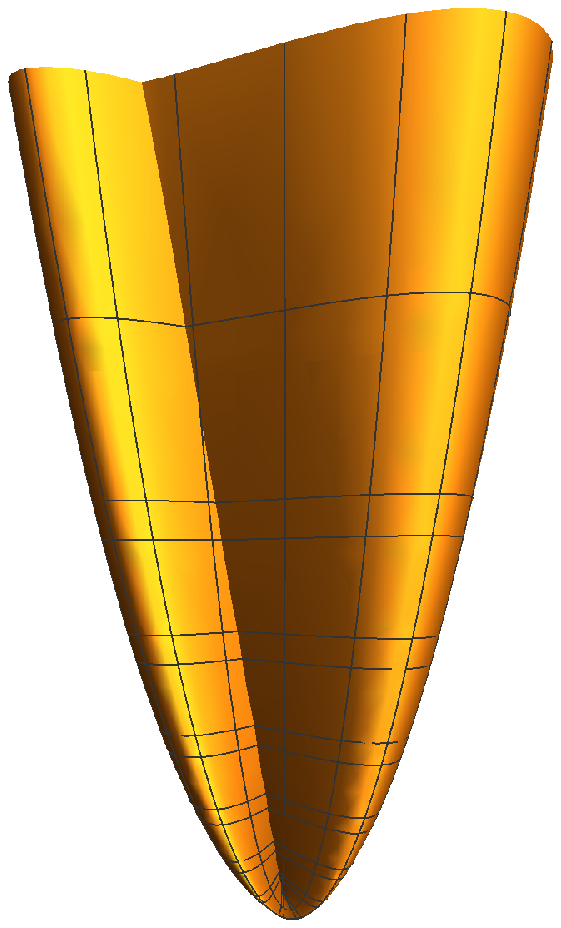}
\end{minipage}
\caption{Curvature locus of Example \ref{ex:5}}
\label{fig:cuspidalsw}
\end{figure}

\end{ex}

\begin{ex}\label{ex:6}
Consider the manifold parametrised by $f(x,y,z)=(x,y,3z^2,x^2+xz+\frac{1}{2}z^2,yz)$. In this case, the curvature locus at $p$, $\Delta_p$, is given by $\eta(a,b,c)=(6c^2,c^2+2a^2+2ac,2bc)$, where $a^2+b^2=1$. Taking cylindrical coordinates, $\Delta_p$ may be parametrised by
$$\eta(\theta,\phi)=\left(\dfrac{3+3\cos(2\phi)}{(\sin(\phi))^2},\dfrac{\cos(2\theta)\sin(\phi)^2+\cos(\theta)\sin(2\phi)+1}{(\sin(\phi))^2},\dfrac{\sin(\theta)\sin(2\phi)}{(\sin(\phi))^2}\right).$$
Figure \ref{fig:cuspidalsw1} shows $\Delta_p$.
Taking the lifting of $f$, we obtain the parametrisation  $\overline{f}(x,y,z)=(x,y,z,3z^2,x^2+xz+\frac{1}{2}z^2,yz)$ of a regular 3-manifold in $\mathbb{R}^6$  such that the curvature locus at the origin is given by
$$\overline{\eta}(\theta,\phi)=\left(3+3\cos(2\phi),\cos(2\theta)\sin(\phi)^2+\cos(\theta)\sin(2\phi)+1,\sin(\theta)\sin(2\phi)\right),$$ that is, a Cross-Cup surface.


\begin{figure}[H]
\center
\includegraphics[scale=0.45]{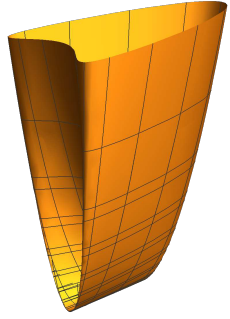}
\caption{Curvature locus of Example \ref{ex:6}}
\label{fig:cuspidalsw1}
\end{figure}
\end{ex}

\section{Appendix: Proof of Theorem \ref{teo:isometria}}\label{appendix}

\begin{proof}
Assume that $j^2f(0)$, $j^2g(0)$ have both $\mathcal{A}^2$-type $(x,y,xz,yz,z^2)$. Then, by Proposition \ref{prop:formanormal}, they can be reduced to the form:
$$
\begin{array}{ll}
j^2f(0)= & (x,y,a_1x^2+a_3y^2+xz+a_6yz,b_1x^2+b_2xy+b_3y^2+b_6yz,\\
 & c_1x^2+c_2xy+c_3y^2+c_4z^2+c_5xz+c_6yz),
\end{array}
$$

$$
\begin{array}{ll}
j^2g(0)= & x,y,\overline{a}_1x^2+\overline{a}_3y^2+xz+\overline{a}_6yz,\overline{b}_1x^2+\overline{b}_2xy+\overline{b}_3y^2+\overline{b}_6yz,\\
& \overline{c}_1x^2+\overline{c}_2xy+\overline{c}_3y^2+\overline{c}_4z^2+\overline{c}_5xz+\overline{c}_6yz),
\end{array}
$$

\noindent with $a_5,\overline{a}_5,c_4,\overline{c}_4>0$. We denote the coordinates in $\mathbb{R}^5$ by $(u,v,w,u',v')$. Then the normal space $N_{p_1}M_1$, $N_{p_2}M_2$ are both equal to the $(w,u',v')$-space and the curvature locus $\Delta_{p_1}(M_1)$, $\Delta_{p_2}(M_2)$  are parametrised respectively by\\

$\eta_1(x,z)=(0,0,a_1x^2+a_3(1-x^2)+2xz+2a_6z\sqrt{1-x^2},b_1x^2+2b_2x\sqrt{1-x^2}+b_3(1-x^2)+2b_6z\sqrt{1-x^2},c_1x^2+2c_2x\sqrt{1-x^2}+c_3(1-x^2)+c_4z^2+2c_5xz+2c_6z\sqrt{1-x^2})$ and \\

$\eta_2(x,z)=(0,0,\overline{a}_1x^2+\overline{a}_3(1-x^2)+2xz+2\overline{a}_6z\sqrt{1-x^2},\overline{b}_1x^2+2\overline{b}_2x\sqrt{1-x^2}+\overline{b}_3(1-x^2)+2\overline{b}_6z\sqrt{1-x^2},
\overline{c}_1x^2+2\overline{c}_2x\sqrt{1-x^2}+\overline{c}_3(1-x^2)+\overline{c}_4z^2+2\overline{c}_5xz+2\overline{c}_6z\sqrt{1-x^2})$.\\

Suppose there is $(\psi,\phi)\in\mathcal{R}^2\times \mathcal{O}(5)$ such that $\phi\circ j^2f(0)\circ\psi=j^2g(0)$. Since they are both homogeneous we can assume that $\psi$ is also a linear map. We denote the matrices of $\psi$ and $\phi$, respectively, by
$$P=\left(
    \begin{array}{ccc}
      d & e & f \\
      g & h & i \\
      j & k & l \\
    \end{array}
  \right),\,\,\,A=\left(
                  \begin{array}{ccccc}
                    a_{11} & a_{12} & a_{13} & a_{14} & a_{15} \\
                     a_{21} & a_{22} & a_{23} & a_{24} & a_{25}  \\
                     a_{31} & a_{32} & a_{33} & a_{34} & a_{35}  \\
                     a_{41} & a_{42} & a_{43} & a_{44} & a_{45} \\
                    a_{51} & a_{52} & a_{53} & a_{54} & a_{55}  \\
                  \end{array}
                \right),
$$
where $\det P\neq0$ and $AA^t=I$.

By comparing coefficients in $\phi\circ j^2f(0)\circ\psi$ and  $j^2g(0)$ we get the following system of 45 equations and 34 variables:

\item $a_{11}d+a_{12}g=1$, $a_{11}e+a_{12}h=0$, $a_{11}f+a_{12}i=0$, $a_{21}d+a_{22}g=0$, $a_{21}e+a_{22}h=1$,

\item $a_{21}f+a_{22}i=0$, $a_{31}d+a_{32}g=0$, $a_{31}e+a_{32}h=0$, $a_{31}f+a_{32}i=0$, $a_{41}d+a_{42}g=0$,

 \item $a_{41}e+a_{42}h=0$, $a_{41}f+a_{42}i=0$, $a_{51}d+a_{52}g=0$, $a_{51}e+a_{52}h=0$, $a_{51}f+a_{52}i=0$,

  \item $a_{15}c_6gj+a_{15}c_2dj+a_{15}c_5dj+a_{15}c_4j^2+a_{13}dj+a_{13}a_6gj+a_{14}b_3g^2+a_{14}b_6gj+a_{15}c_1d^2+a_{15}c_3g^2+a_{13}a_1d^2+a_{13}a_3g^2+a_{14}b_1d^2+a_{14}b_2dg=0,$

  \item $a_{15}c_6hk+a_{15}c_2eh+a_{15}c_5ek+a_{15}c_4k^2+a_{13}ek+a_{13}a_6hk+a_{14}b_3h^2+a_{14}b_6hk+a_{15}c_1e^2+a_{15}c_3h^2+a_{13}a_1e^2+a_{13}a_3h^2+a_{14}b_1e^2a_{14}b_2eh=0,$

  \item $a_{15}c_6il+a_{15}c_2fi+a_{15}c_5fl+a_{15}c_4l^2+a_{13}fl+a_{13}a_6il+a_{14}b_3i^2+a_{14}b_6il+a_{15}c_1f^2+a_{15}c_3i^2+a_{13}a_1f^2+a_{13}a_3i^2+a_{14}b_1f^2+a_{14}b_2fi=0,$

  \item $a_{14}b_6gl+a_{14}b_6ij+a_{15}c_6ij+2a_{15}c_4jl+a_{15}c_5dl+a_{15}c_5fj+a_{15}c_6gl+a_{15}c_1df+a_{15}c_2di+a_{15}c_2fg+2a_{15}c_3gi+a_{13}a_6gl+a_{13}a_6ij+2a_{14}b_1df+
  2a_{14}b_3gi+2a_{13}a_1df+2a_{13}a_3gi+a_{13}dl+a_{13}fj+a_{14}b_2di+a_{14}b_2fg=0,$

    \item $a_{14}b_6hl+a_{14}b_6ik+a_{15}c_6hl+2a_{15}c_4kl+a_{15}c_6ik+a_{15}c_5el+a_{15}c_5fk+2a_{15}c_1ef+a_{15}c_2ei+a_{15}c_2fh+2a_{15}c_3hi+a_{13}a_6hl+a_{13}a_6ik+2a_{14}b_1ef+
  2a_{14}b_3hi+2a_{13}a_1ef+2a_{13}a_3hi+a_{13}el+a_{13}fk+a_{14}b_2ei+a_{14}b_2fh=0,$

    \item $a_{14}b_6hj+2a_{13}a_3gh+2a_{14}b_3gh+a_{14}b_6gk+a_{15}c_6hj+2a_{15}c_4jk+a_{15}c_5dk+a_{15}c_5ej+a_{15}c_6gk+a_{15}c_1de+a_{15}c_2dh+a_{15}c_2eg+a_{13}a_6gk+a_{13}a_6hj+
  2a_{14}b_1de+2a_{13}a_1de+a_{13}dk+a_{13}ej+2a_{15}c_3gh=0+a_{14}b_2dh+a_{14}b_2eg,$

  \item $a_{25}c_6gj+a_{25}c_2dg+a_{25}c_5dj+a_{25}c_4j^2+a_{23}dj+a_{23}a_6gj+a_{24}b_3g^2+a_{24}b_6gj+a_{25}c_1d^2+a_{25}c_3g^2+a_{23}a_1d^2+a_{23}a_3g^2+a_{24}b_1d^2+a_{24}b_2dg=0,$

   \item $a_{25}c_6hk+a_{25}c_2eh+a_{25}c_5ek+a_{23}a_6hk+a_{25}c_4k^2+a_{23}ek+a_{24}b_3h^2+a_{24}b_6hk+a_{25}c_1e^2+a_{25}c_3h^2+a_{23}a_1e^2+a_{23}a_3h^2+a_{24}b_1e^2+a_{24}b_2eh=0,$

   \item $a_{25}c_6il+a_{25}c_5fl+a_{23}a_6il+a_{25}c_2fi+a_{25}c_4l^2+a_{23}fl+a_{24}b_1f^2+a_{24}b_3i^2+a_{24}b_6il+a_{25}c_1f^2+a_{25}c_3i^2+a_{23}a_3i^2+a_{23}a_1f^2+a_{24}b_2fi=0,$

     \item $a_{24}b_6gl+a_{24}b_6ij+a_{25}c_6ij+2a_{25}c_4jl+a_{25}c_5dl+a_{25}c_5fj+a_{25}c_6gl+a_{25}c_1df+a_{25}c_2di+a_{25}c_2fg+2a_{25}c_3gi+a_{23}a_6gl+a_{23}a_6ij+2a_{24}b_1df+
  2a_{24}b_3gi+2a_{23}a_1df+2a_{23}a_3gi+a_{23}dl+a_{23}fj+a_{24}b_2di+a_{24}b_2fg=0,$

    \item $a_{24}b_6hl+a_{24}b_6ik+a_{25}c_6hl+2a_{25}c_4kl+2a_{25}c_6ik+a_{25}c_5el+a_{25}c_5fk+2a_{25}c_1ef+a_{25}c_2ei+a_{25}c_2fh+2a_{25}c_3hi+a_{23}a_6hl+a_{23}a_6ik+2a_{24}b_1ef+
  2a_{24}b_3gi+2a_{23}a_1ef+2a_{23}a_3hi+a_{23}el+a_{23}fk+a_{24}b_2ei+a_{24}b_2fh=0,$

   \item $a_{24}b_6hj+2a_{25}c_3gh+2a_{24}b_3gh+a_{24}b_6gk+2a_{23}a_3gh+a_{25}c_6hj+2a_{25}c_4jk+a_{25}c_5dk+a_{25}c_5ej+a_{25}c_6gk+2a_{25}c_1de+a_{25}c_2dh+a_{25}c_2eg+a_{23}a_6gk+
  2a_{24}b_1de+a_{23}a_6hj+2a_{23}a_1de+a_{23}dk+a_{23}ej+a_{24}b_2dh+a_{24}b_2eg=0,$

   \item $a_{35}c_6gj+a_{35}c_2dg+a_{35}c_5dj+a_{33}a_6jg+a_{33}dj+a_{35}c_1d^2+a_{35}c_3g^2+a_{35}c_4j^2+a_{33}a_3g^2+a_{34}b_1d^2+a_{34}b_3g^2+a_{34}b_6gj+a_{33}a_1d^2
       +a_{34}b_2dg=\overline{a}_1,$

  \item $a_{35}c_6hk+a_{35}c_2eh+a_{35}c_5ek+a_{33}a_6hk+a_{33}ek+a_{35}c_1e^2+a_{35}c_3h^2+a_{35}c_4k^2+a_{33}a_1e^2+a_{34}b_1e^2+a_{34}b_3h^2+a_{34}b_6hk+a_{33}a_3h^2
      +a_{34}b_2eh=\overline{a}_3,$

  \item $a_{35}c_6il+a_{35}c_2fi+a_{35}c_5fl+a_{33}a_6il+a_{33}fl+a_{33}a_6il+a_{35}c_1f^2+a_{35}c_3i^2+a_{35}c_4l^2+a_{33}a_3i^2+a_{33}a_1f^2+a_{34}b_1f^2+a_{34}b_3i^2+a_{34}b_6il
     +a_{34}b_2fi=0,$

 \item $a_{35}c_6ij+a_{35}c_6gl+a_{35}c_5fj+a_{35}c_2fg+2a_{35}c_3gi+2a_{35}c_4jl+a_{35}c_5dl+2a_{34}b_3gi+2a_{35}c_1df+a_{35}c_2di+a_{33}fj+a_{33}a_6gl+a_{33}a_6ij+2a_{34}b_1df+
  a_{34}b_6ij+a_{34}b_6gl+2a_{33}a_1df+2a_{33}a_3gi+a_{33}dl+a_{34}b_2di+a_{34}b_2fg=1,$

  \item $a_{35}c_6ik+a_{35}c_6hl+a_{35}c_5fk+a_{35}c_2fh+2a_{35}c_3hi+2a_{35}c_4kl+a_{35}c_5el+2a_{34}b_1ef+2a_{35}b_3hi+a_{35}c_1ef+a_{35}c_2ei+a_{33}fk+a_{33}a_6hl+a_{33}a_6ik+
  a_{34}hl+a_{34}ik+2a_{33}a_1ef+2a_{33}a_3hi+a_{33}el+a_{34}b_2ei+a_{34}b_2fh=\overline{a}_6,$

\item $a_{35}c_6hj+a_{35}c_6gk+2a_{35}c_4jk+a_{35}c_5dk+a_{35}c_5ej+2a_{35}c_1de+a_{35}c_2dh+a_{35}c_2eg+a_{33}a_6gk+a_{33}a_6hj+2a_{34}b_1de+2a_{35}c_3gh+2a_{33}a_3gh+2a_{34}b_3g^2+
  a_{34}b_6gk+a_{34}b_6hj+2a_{33}a_1de+a_{33}dk+a_{33}ej+a_{34}b_2dh+a_{34}b_2eg=0,$

  \item $a_{45}c_6gj+a_{45}c_2dg+a_{45}c_5dj+a_{43}a_6jg+a_{43}dj+a_{45}c_1d^2+a_{45}c_3g^2+a_{45}c_4j^2+a_{43}a_3g^2+a_{44}b_1d^2+a_{44}b_3g^2+a_{44}b_6gj+a_{43}a_1d^2
       +a_{44}b_2dg=\overline{b}_1,$

 \item $a_{45}c_6hk+a_{45}c_2eh+a_{45}c_5ek+a_{43}a_6jg+a_{43}ek+a_{45}c_1e^2+a_{45}c_3h^2+a_{45}c_4k^2+a_{43}a_3h^2+a_{44}b_1e^2+a_{44}b_3h^2+a_{44}b_6hk+a_{43}a_1e^2
       +a_{44}b_2eh=\overline{b}_3,$

  \item $a_{45}c_6il+a_{45}c_2fi+a_{45}c_5fl+a_{43}a_6il+a_{43}fl+a_{45}c_1f^2+a_{45}c_3i^2+a_{45}c_4l^2+a_{43}a_3i^2+a_{44}b_1f^2+a_{44}b_3i^2+a_{44}b_6il+a_{43}a_1f^2
      +a_{44}b_2fi =0,$

 \item $a_{45}c_6ij+a_{45}c_6gl+a_{45}c_5fj+a_{45}c_2fg+2a_{45}c_3gi+2a_{45}c_4jl+a_{45}c_5dl+2a_{44}b_3gi+2a_{45}c_1df+a_{45}c_2di+a_{43}fj+a_{43}a_6gl+a_{43}a_6ij+2a_{44}b_1df+
  a_{44}b_6ij+a_{44}b_6gl+2a_{43}a_1df+2a_{43}a_3gi+a_{43}dl+a_{44}b_2di+a_{44}b_2fg=0,$

  \item $a_{45}c_6ik+a_{45}c_6hl+a_{45}c_5el+a_{45}c_2fh+2a_{45}c_3hi+2a_{45}c_4kl+a_{45}c_5fk+2a_{44}b_3hi+2a_{45}c_1ef+a_{45}c_2ei+a_{43}el+a_{43}a_6hl+a_{43}a_6ik+2a_{44}b_1ef+
  a_{44}b_6ik+a_{44}b_6hl+2a_{43}a_1ef+2a_{43}a_3hi+a_{43}fk+a_{44}b_2ei+a_{44}b_2fh=\overline{b}_6,$

 \item $a_{45}c_6gk+a_{45}c_6hj+a_{45}c_5ej+a_{45}c_2eg+2a_{45}c_3hg+2a_{45}c_4kj+a_{45}c_5dk+2a_{44}b_3hg+2a_{45}c_1ed+a_{45}c_2dh+a_{43}dk+a_{43}a_6hj+a_{43}a_6gk+2a_{44}b_1ed+
  a_{44}b_6gk+a_{44}b_6hj+2a_{43}a_1ed+2a_{43}a_3hg+a_{43}ej+a_{44}b_2dh+a_{44}b_2eg=\overline{b}_2,$

  \item $a_{55}c_6gj+a_{55}c_2dg+a_{55}c_5dj+a_{53}a_6jg+a_{53}dj+a_{55}c_1d^2+a_{55}c_3g^2+a_{55}c_4j^2+a_{53}a_3g^2+a_{54}b_1d^2+a_{54}b_3g^2+a_{54}b_6gj+a_{53}a_1d^2
       +a_{54}b_2dg =\overline{c}_1,$

 \item $a_{55}c_6hk+a_{55}c_2eh+a_{55}c_5ek+a_{53}a_6hk+a_{53}ek+a_{55}c_1e^2+a_{55}c_3h^2+a_{55}c_4k^2+a_{53}a_3h^2+a_{54}b_1e^2+a_{54}b_3h^2+a_{54}b_6hk+a_{53}a_1e^2
       +a_{54}b_2eh =\overline{c}_3,$

 \item $a_{55}c_6il+a_{55}c_2fi+a_{55}c_5fl+a_{53}a_6il+a_{53}fl+a_{55}c_1f^2+a_{55}c_3i^2+a_{55}c_4l^2+a_{53}a_3i^2+a_{54}b_1f^2+a_{54}b_3i^2+a_{54}b_6il+a_{53}a_1f^2
       +a_{54}b_2fi =\overline{c}_4,$

  \item $a_{55}c_6ij+a_{55}c_6gl+a_{55}c_5fj+a_{55}c_2fg+2a_{55}c_3gi+2a_{55}c_4jl+a_{55}c_5dl+2a_{54}b_3gi+2a_{55}c_1df+a_{55}c_2di+a_{53}fj+a_{53}a_6gl+a_{53}a_6ij+2a_{54}b_1df+
  a_{54}b_6ij+a_{54}b_6gl+2a_{53}a_1df+2a_{53}a_3gi+a_{53}dl+a_{54}b_2di+a_{54}b_2fg=\overline{c}_5,$

  \item $a_{55}c_6ik+a_{55}c_6hl+a_{55}c_5el+a_{55}c_2fh+2a_{55}c_3hi+2a_{55}c_4kl+a_{55}c_5fk+2a_{54}b_3hi+2a_{55}c_1ef+a_{55}c_2ei+a_{53}el+a_{53}a_6hl+a_{53}a_6ik+2a_{54}b_1ef+
  a_{54}b_6ik+a_{54}b_6hl+2a_{53}a_1ef+2a_{53}a_3hi+a_{53}fk+a_{54}b_2ei+a_{54}b_2fh=\overline{c}_6,$

  \item $a_{55}c_6gk+a_{55}c_6hj+a_{55}c_5ej+a_{55}c_2eg+2a_{55}c_3hg+2a_{55}c_4kj+a_{55}c_5dk+2a_{54}b_3hg+2a_{55}c_1ed+a_{55}c_2dh+a_{53}dk+a_{53}a_6hj+a_{53}a_6gk+2a_{54}b_1ed+
  a_{54}b_6gk+a_{54}b_6hj+2a_{53}a_1ed+2a_{53}a_3hg+a_{53}ej+a_{54}b_2dh+a_{54}b_2eg=\overline{c}_2.$ \\

  From the analysis of this system using software MAPLE, we deduce that
  $d,h,l\neq0$, $e=f=g=i=j=k=0$, $a_{11},a_{22},a_{33},a_{44}\neq0$,  $a_{55}>0$, $a_{12},a_{13},a_{14},a_{15},a_{21},a_{23}, a_{24},$ $ a_{25},a_{31},a_{32},a_{35},a_{41}, a_{42},a_{43},a_{45},a_{51},a_{52}=0$. Now using the fact that $AA^t=I$, we have $a_{11}=\pm1$, $a_{22}=\pm1$, $a_{33}=\pm1$, $a_{44}\pm1$, $a_{55}=1$, $a_{34},a_{53},a_{54}=0$. Hence the system reduces to:

  \item $a_{11}d=1$, $a_{22}h=1$, $a_{33}a_3h^2=\overline{a}_3$, $a_{33}dl=1$,  $a_{33}a_6dl=\overline{a}_6$,  $a_{44}b_6hl=\overline{b}_6$, $a_{44}b_2dh=\overline{b}_2$,

  \item   $a_{44}b_1d^2=\overline{b}_1$, $a_{44}b_3h^2=\overline{b}_3$, $a_{55}c_1d^2=\overline{c}_1$, $a_{55}c_2dh=\overline{c}_2$, $a_{55}c_3h^2=\overline{c}_3$, $a_{55}c_4l^2=\overline{c}_4$,   $a_{55}c_5dl=\overline{c}_5$,  $a_{55}c_6hl=\overline{c}_6$.
  Thus, there are only sixteen possible solutions to the system:

  \begin{enumerate}
    \item $d=h=l=1$, $a_{11}, a_{22},a_{33},a_{44},a_{55}=1$ and $(\overline{a}_1,\overline{a}_3,\overline{a}_6,\overline{b}_1,\overline{b}_2,\overline{b}_3,\overline{b}_6,\overline{c}_1,\overline{c}_2,\overline{c}_3,
        \overline{c}_4,\overline{c}_5,$ $\overline{c}_6)= (a_1,{a}_3,a_6, {b}_1,b_2,{b}_3,b_6,{c}_1,{c}_2,{c}_3,{c}_4,{c}_5,{c}_6);$

    \item $d=-1$, $h=l=1$, $a_{11}=a_{33}=-1$, $a_{22}=a_{44}=a_{55}=1$ and $(\overline{a}_1,\overline{a}_3,\overline{a}_6,\overline{b}_1,\overline{b}_2,\overline{b}_6,\overline{b}_3,$ $\overline{c}_1, \overline{c}_2,\overline{c}_3,\overline{c}_4,\overline{c}_5,\overline{c}_6)=
        (-a_1,-{a}_3,-a_6, {b}_1,-b_2,{b}_3,b_6,{c}_1,-{c}_2,{c}_3,{c}_4,-{c}_5,{c}_6);$

    \item $d=1$, $h=l=-1$,  $a_{11}=a_{44}=a_{55}=1$, $a_{22}=a_{33}=-1$ and $(\overline{a}_1,\overline{a}_3,\overline{a}_6,\overline{b}_1,\overline{b}_2,\overline{b}_3,\overline{b}_6,$ $\overline{c}_1, \overline{c}_2,\overline{c}_3,\overline{c}_4,\overline{c}_5,\overline{c}_6)=
        (-a_1,-{a}_3,-a_6, {b}_1,-b_2,{b}_3,b_6{c}_1,-{c}_2,{c}_3,{c}_4,-{c}_5,{c}_6);$

    \item $d=h=l=-1$, $a_{11}=a_{22}=-1$, $a_{33}=a_{44}=a_{55}=1$ and $(\overline{a}_1,\overline{a}_3,\overline{a}_6,\overline{b}_1,\overline{b}_2,\overline{b}_3,\overline{b}_6,\overline{c}_1,$ $ \overline{c}_2, \overline{c}_3,\overline{c}_4,\overline{c}_5,\overline{c}_6)=
        (a_1,{a}_3,a_6, {b}_1,b_2,{b}_3,b_6,{c}_1,{c}_2,{c}_3,{c}_4,{c}_5,{c}_6);$

    \item $d=h=-1$, $l=1$, $a_{11}=a_{22}=a_{33}=a_{44}=-1$, $a_{55}=1$ and $(\overline{a}_1,\overline{a}_3,\overline{a}_6,\overline{b}_1,\overline{b}_2,\overline{b}_3,\overline{b}_6,$ $\overline{c}_1, \overline{c}_2, \overline{c}_3,\overline{c}_4,\overline{c}_5,\overline{c}_6)=
        (-a_1,-{a}_3,a_6, -{b}_1,-b_2,-{b}_3,b_6,{c}_1,{c}_2,{c}_3,{c}_4,-{c}_5,-{c}_6);$

    \item  $d=h=1$, $l=-1$, $a_{11}=a_{22}=a_{55}=1$, $a_{33}=a_{44}=-1$ and $(\overline{a}_1,\overline{a}_3,\overline{a}_6,\overline{b}_1,\overline{b}_2,\overline{b}_3,\overline{b}_6,$ $\overline{c}_1, \overline{c}_2, \overline{c}_3,\overline{c}_4,\overline{c}_5,\overline{c}_6)=
        (-a_1,-{a}_3,a_6, -{b}_1,-b_2,-{b}_3,b_6,{c}_1,{c}_2,{c}_3,{c}_4,-{c}_5,-{c}_6)$;

          \item  $d=h=1$, $l=-1$, $a_{11}=a_{22}=a_{55}=1$, $a_{33}=a_{44}=1$ and $(\overline{a}_1,\overline{a}_3,\overline{a}_6,\overline{b}_1,\overline{b}_2,\overline{b}_3,\overline{b}_6,$ $\overline{c}_1,$ $ \overline{c}_2, \overline{c}_3,\overline{c}_4,\overline{c}_5,\overline{c}_6)=
        (-a_1,-{a}_3,a_6, {b}_1,b_2,{b}_3,-b_6{c}_1,{c}_2,{c}_3,{c}_4,-{c}_5,-{c}_6)$;

            \item  $d=h=l=1$, $a_{11}=a_{22}=a_{33}=a_{55}=1$, $a_{44}=-1$ and $(\overline{a}_1,\overline{a}_3,\overline{a}_6,\overline{b}_1,\overline{b}_2,\overline{b}_3,\overline{b}_6,$ $\overline{c}_1,$ $ \overline{c}_2, \overline{c}_3,\overline{c}_4,\overline{c}_5,\overline{c}_6)=
        (a_1,{a}_3,a_6, -{b}_1,-b_2,-{b}_3,-b_6,{c}_1,{c}_2,{c}_3,{c}_4,{c}_5,{c}_6)$;

            \item  $d=l=-1$, $h=1$, $a_{11}=-1$, $a_{22}=a_{33}=a_{44}=a_{55}=1$ and $(\overline{a}_1,\overline{a}_3,\overline{a}_6,\overline{b}_1,\overline{b}_2,\overline{b}_3,\overline{b}_6,$ $\overline{c}_1,$ $ \overline{c}_2, \overline{c}_3,\overline{c}_4,\overline{c}_5,\overline{c}_6)=
        (a_1,{a}_3,-a_6, {b}_1,-b_2,{b}_3,-b_6,{c}_1,-{c}_2,{c}_3,{c}_4,{c}_5,-{c}_6)$;

        \item  $d=h=1$, $l=-1$, $a_{11}=a_{22}=a_{44}=a_{55}=1$, $a_{33}=-1$ and $(\overline{a}_1,\overline{a}_3,\overline{a}_6,\overline{b}_1,\overline{b}_2,\overline{b}_3,\overline{b}_6,$ $\overline{c}_1,$ $ \overline{c}_2, \overline{c}_3,\overline{c}_4,\overline{c}_5,\overline{c}_6)=
        (-a_1,-{a}_3,a_6, {b}_1,b_2,{b}_3,-b_6,{c}_1,{c}_2,{c}_3,{c}_4,-{c}_5,-{c}_6)$;

        \item  $d=h=-1$, $l=1$, $a_{11}=a_{22}=a_{33}=-1$, $a_{44}=a_{55}=1$ and $(\overline{a}_1,\overline{a}_3,\overline{a}_6,\overline{b}_1,\overline{b}_2,\overline{b}_3,\overline{b}_6,$ $\overline{c}_1,$ $ \overline{c}_2, \overline{c}_3,\overline{c}_4,\overline{c}_5,\overline{c}_6)=
        (-a_1,-{a}_3,a_6, {b}_1,b_2,{b}_3,-b_6,{c}_1,{c}_2,{c}_3,{c}_4,-{c}_5,-{c}_6)$;

        \item  $d=h=l=-1$, $a_{11}=a_{22}=a_{44}=-1$, $a_{33}=a_{55}=1$ and $(\overline{a}_1,\overline{a}_3,\overline{a}_6,\overline{b}_1,\overline{b}_2,\overline{b}_3,\overline{b}_6,$ $\overline{c}_1,$ $ \overline{c}_2, \overline{c}_3,\overline{c}_4,\overline{c}_5,\overline{c}_6)=
        (a_1,{a}_3,a_6, -{b}_1,-b_2,-{b}_3,-b_6,{c}_1,{c}_2,{c}_3,{c}_4,{c}_5,{c}_6)$;

        \item  $d=1$, $h=l=-1$, $a_{11}=a_{55}=1$, $a_{22}=a_{33}=a_{44}=-1$ and $(\overline{a}_1,\overline{a}_3,\overline{a}_6,\overline{b}_1,\overline{b}_2,\overline{b}_3,\overline{b}_6,$ $\overline{c}_1,$ $ \overline{c}_2, \overline{c}_3,\overline{c}_4,\overline{c}_5,\overline{c}_6)=
        (-a_1,-{a}_3,-a_6, -{b}_1,b_2,-{b}_3,-b_6,{c}_1,-{c}_2,{c}_3,{c}_4,-{c}_5,{c}_6)$;

        \item  $d=-1$, $h=l=1$, $a_{11}=a_{33}=a_{44}=-1$, $a_{22}=a_{55}=1$ and $(\overline{a}_1,\overline{a}_3,\overline{a}_6,\overline{b}_1,\overline{b}_2,\overline{b}_3,\overline{b}_6,$ $\overline{c}_1,$ $ \overline{c}_2, \overline{c}_3,\overline{c}_4,\overline{c}_5,\overline{c}_6)=
        (-a_1,-{a}_3,-a_6, -{b}_1,b_2,-{b}_3,-b_6,{c}_1,-{c}_2,{c}_3,{c}_4,-{c}_5,{c}_6)$;

    \item  $d=l=-1$, $h=1$, $a_{11}=a_{44}=-1$, $a_{22}=a_{33}=a_{55}=1$ and $(\overline{a}_1,\overline{a}_3,\overline{a}_6,\overline{b}_1,\overline{b}_2,\overline{b}_3,\overline{b}_6,$ $\overline{c}_1,$ $ \overline{c}_2, \overline{c}_3,\overline{c}_4,\overline{c}_5,\overline{c}_6)=
        (a_1,{a}_3,-a_6, -{b}_1,b_2,-{b}_3,b_6,{c}_1,-{c}_2,{c}_3,{c}_4,{c}_5,-{c}_6)$;

         \item  $d=-1$, $h=l=1$, $a_{11}=a_{33}=-1$, $a_{22}=a_{44}=a_{55}=1$ and $(\overline{a}_1,\overline{a}_3,\overline{a}_6,\overline{b}_1,\overline{b}_2,\overline{b}_3,\overline{b}_6,$ $\overline{c}_1,$ $ \overline{c}_2, \overline{c}_3,\overline{c}_4,\overline{c}_5,\overline{c}_6)=
        (-a_1,-{a}_3,-a_6, -{b}_1,-b_2,{b}_3,b_6,{c}_1,-{c}_2,{c}_3,{c}_4,-{c}_5,{c}_6)$.

  \end{enumerate}
  In any case, we conclude that $\phi (\Delta_{p_1}(M_1))=\Delta_{p_2}(M_2)$.

  For the converse, suppose that there is a linear isometry $\phi$ in the $(w,u',v')$-space such that $\phi (\Delta_{p_1}(M_1))=\Delta_{p_2}(M_2)$. We denote by $\widetilde{A}$ the matrix of $\phi$. By comparing coefficients in the equations of the curvature locus and using similar arguments, we find there are only two possibilities:

  \begin{enumerate}
    \item $\widetilde{A}=\left(
                           \begin{array}{ccc}
                             1 & 0 & 0 \\
                             0 & 1 & 0 \\
                             0 & 0 & 1 \\
                           \end{array}
                         \right)$ and  $(\overline{a}_1,\overline{a}_3,\overline{a}_6,\overline{b}_1,
                         \overline{b}_2,\overline{b}_3,\overline{b}_6,\overline{c}_1,\overline{c}_2,
                         \overline{c}_3,\overline{c}_4,\overline{c}_5,\overline{c}_6)= (a_1,{a}_3,{a}_5,a_6, {b}_1,$ $b_2,{b}_3,b_6,{c}_1,{c}_2,{c}_3,{c}_4,{c}_5,{c}_6)$;

                           \item $\widetilde{A}=\left(
                           \begin{array}{ccc}
                             1 & 0 & 0 \\
                             0 & -1 & 0 \\
                             0 & 0 & 1 \\
                           \end{array}
                         \right)$ and  $(\overline{a}_1,\overline{a}_3,\overline{a}_6,\overline{b}_1,\overline{b}_2,\overline{b}_3,\overline{b}_6,\overline{c}_1,\overline{c}_2,
                         \overline{c}_3,\overline{c}_4,\overline{c}_5,\overline{c}_6)= (a_1,{a}_3,a_6, {b}_1,$ $b_2,{b}_3,-b_6,{c}_1,{c}_2,{c}_3,{c}_4,{c}_5,{c}_6)$.

  \end{enumerate}
Hence, we extend $\phi$ to a linear isometry of $\mathbb{R}^5$ in the obvious way, so that $\phi\circ j^2f(0)=j^2g(0)$.
The remaining cases where the $2$-jets have type $(x,y,z^2,xz,0)$, $(x,y,xz,yz,0)$, $(x,y,z^2,0,0)$, $(x,y,xz,0,0)$ or $(x,y,0,0,0)$ are treated in a similar way.
\end{proof}

{\small

\par\noindent
P. Benedini Riul, Departamento de Matem\'{a}tica, Universidade Federal de S\~{a}o Carlos, Caixa Postal 676, CEP 13560-905, S\~{a}o Carlos-SP , Brazil
\par\noindent
e-mail: \texttt{pedro.benedini.riul@gmail.com}
\\
\par\noindent
M. A. S. Ruas and A. de J. Sacramento,  Departamento de Matem\'{a}tica,
ICMC Universidade de S\~{a}o Paulo, Campus de S\~{a}o Carlos, Caixa Postal 668, CEP 13560-970, S\~{a}o Carlos-SP, Brazil
\par\noindent
e-mails: \texttt{maasruas@icmc.usp.br, anddyunesp@yahoo.com.br}


}

\end{document}